\DeclareMathOperator*{\argmin}{arg\,min}
\newdimen\iwidth
\newdimen\iheight
\crefname{hypothesis}{Hypothesis}{Hypotheses}
\title{PNKH-B: A Projected Newton-Krylov Method for Large-Scale Bound-Constrained Optimization\thanks{Submitted to the editors DATE.
\funding{KK and LR are supported by the U.S. National Science Foundation (NSF) Grant No.  DMS 1751636.
\newline
SWF is supported by AFOSR MURI FA9550-18-1-0502, AFOSR Grant No. FA9550-18-1-0167, and ONR Grant No. N00014-18-1-2527.}}}
\author{Kelvin Kan\thanks{Department of Mathematics, Emory University, USA 
  (\email{kelvin.kan@emory.edu}).}
\and Samy Wu Fung\thanks{Department of Mathematics, University of California, Los Angeles, CA, USA 
  (\email{swufung@math.ucla.edu}).}
\and Lars Ruthotto\thanks{Departments of Mathematics and Computer Science, Emory University, USA (\email{lruthotto@emory.edu}).}}
\newcommand{\hf}{{\frac 12}}
\newcommand{\bfA}{{\bf A}}
\newcommand{\bfC}{{\bf C}}
\newcommand{\bfD}{{\bf D}}
\newcommand{\bfE}{{\bf E}}
\newcommand{\bfF}{{\bf F}}
\newcommand{\bfG}{{\bf G}}
\newcommand{\bfH}{{\bf H}}
\newcommand{\bfI}{{\bf I}}
\newcommand{\bfJ}{{\bf J}}
\newcommand{\bfK}{{\bf K}}
\newcommand{\bfL}{{\bf L}}
\newcommand{\bfM}{{\bf M}}
\newcommand{\bfP}{{\bf P}}
\newcommand{\bfQ}{{\bf Q}}
\newcommand{\bfR}{{\bf R}}
\newcommand{\bfS}{{\bf S}}
\newcommand{\bfT}{{\bf T}}
\newcommand{\bfU}{{\bf U}}
\newcommand{\bfV}{{\bf V}}
\newcommand{\bfW}{{\bf W}}
\newcommand{\bfX}{{\bf X}}
\newcommand{\bfb}{{\bf b}}
\newcommand{\bfc}{{\bf c}}
\newcommand{\bfe}{{\bf e}}
\newcommand{\bfx}{{\bf x}}
\newcommand{\bfy}{{\bf y}}
\newcommand{\bfu}{{\bf u}}
\newcommand{\bfp}{{\bf p}}
\newcommand{\bfd}{{\bf d}}
\newcommand{\bfm}{{\bf m}}
\newcommand{\bfr}{{\bf r}}
\newcommand{\bfv}{{\bf v}}
\newcommand{\bfw}{{\bf w}}
\newcommand{\bfl}{{\bf l}}
\newcommand{\bfz}{{\bf z}}
\newcommand{\calA}{{\cal A}}
\newcommand{\CO}{{\cal O}}
\newcommand{\R}{\ensuremath{\mathds{R}}}
\newtheorem{example}{Example}
\begin{document}

\maketitle
\begin{abstract}
We present PNKH-B, a projected Newton-Krylov method for iteratively solving large-scale optimization problems with bound constraints. PNKH-B is geared toward situations in which function and gradient evaluations are expensive, and the (approximate) Hessian is only available through matrix-vector products. This is commonly the case in large-scale parameter estimation, machine learning, and image processing. In each iteration, PNKH-B uses a low-rank approximation of the (approximate) Hessian to determine the search direction and construct the metric used in a projected line search. The key feature of the metric is its consistency with the low-rank approximation of the Hessian on the Krylov subspace. This renders PNKH-B similar to a projected variable metric method. We present an interior point method to solve the quadratic projection problem efficiently. Since the interior point method effectively exploits the low-rank structure, its computational cost only scales linearly with respect to the number of variables, and it only adds negligible computational time. We also experiment with variants of PNKH-B that incorporate estimates of the active set into the Hessian approximation. We prove the global convergence to a stationary point under standard assumptions. Using three numerical experiments motivated by parameter estimation, machine learning, and image reconstruction, we show that the consistent use of the Hessian metric in  PNKH-B leads to fast convergence, particularly in the first few iterations. We provide our MATLAB implementation at \url{https://github.com/EmoryMLIP/PNKH-B}.
\end{abstract}

\begin{keywords}
projected Newton-Krylov method, bound-constrained optimization, large-scale optimization
\end{keywords}

\begin{AMS}
49M15, 90C06, 90C25, 90C26, 90C51
\end{AMS}

\section{Introduction}
We introduce PNKH-B, a projected Newton-Krylov method for approximately solving large-scale optimization problems with bound constraints such as
\begin{equation}\label{eq:optProb}
\min_{\bfx} f(\bfx) \quad \text{subject to} \quad \bfl \leq \bfx \leq  \bfu.
\end{equation}
Here,  $f: \R^n \to \R$ is twice differentiable, the inequalities are applied component-wise, and the vectors $\bfl, \bfu\in\R^n \cup \{ \pm \infty \}$ with $\bfl \leq \bfu$ define the box $C=[\bfl,\bfu]$.
While our method also applies to small and medium scale problems, we focus in this paper on situations in which evaluating $f$ and its gradient is computationally expensive and the (approximate) Hessian is only available through matrix-vector products. This is common in PDE parameter estimation~\cite{dey1979,Flath:2011gm,Haber2015,mcgillivray1992,ruthotto2017jinv,wufung2019thesis,wufung2019uncertainty}, image processing~\cite{chan2020two,Hu2019,hu2019spectral,hu2019nonlinear,wang2019AO,wang2019,fung2020multigrid}, neural networks~\cite{chang2018,fung2019admm,haber2019imexnet,haber2017stable,haber2018learning,ruthotto2019deep}, etc. PNKH-B aims to approximately solve the problem using only a few function and gradient evaluations and Hessian-vector products.

Our PNKH-B scheme is motivated by projected variable metric methods and its  $k$th  iteration reads
\begin{equation}\label{eq:iterHess}
\bfx_{k+1} = \Pi_{\|\cdot\|_{\tilde{\bfH}_{k}}}\left( \bfy_{k+1}  \right), \quad \text{ with } \quad \bfy_{k+1} = \bfx_k - \mu_k \bfH_k^{-1} \nabla f(\bfx_k),
\end{equation}
where $\bfH_k$ is a low-rank approximation of the Hessian, $\tilde{\bfH}_k$ is a symmetric positive definite matrix that equals $\bfH_k$ on its Krylov subspace, $\mu_k$ is a suitable step size, $\Pi_{\|\cdot\|_{\tilde{\bfH}_{k}}}$ is the projection operator onto the box $C$ with the metric induced by $\tilde{\bfH}_k$, that is,
\begin{equation}\label{eq:projHess}
\Pi_{\|\cdot\|_{\tilde{\bfH}_{k}}}(\bfy) = \argmin_{\bfz} \hf\|\bfz-\bfy\|_{\tilde{\bfH}_k}^2 \quad \text {subject to} \quad \bfl \leq \bfz \leq  \bfu.
\end{equation}
We slightly abuse notation and denote the pseudoinverse of $\bfH_k$ by $\bfH_k^{-1}$ in order to be consistent with the conventional notation used in Newton's method. In our experiments, we obtain $\tilde{\bfH}_k$ by adding a small shift to the nullspace of $\bfH_k$.
The norm induced by the approximate Hessian is defined by 
$\|\bfv\|_{\tilde{\bfH}_k} :=\sqrt{ \bfv^\top \tilde{\bfH}_k \bfv}$ for all $\bfv \in \mathbb{R}^n$.
PNKH-B is called a generalized variable metric (or generalized one-metric) method because the variable metric used to determine the search direction and the projection are equivalent on the subspace spanned by $\bfH_k$.

The projection problem~\eqref{eq:projHess} is a convex quadratic program, which has no closed-form solution in general.
Exceptions are projected gradient methods that are obtained by using a diagonal Hessian approximation, where the projection simplifies to
\begin{equation}\label{eq:ProjEuclidean}
\Pi_{\|\cdot\|_{2}}(\bfy) = \max\{\min\{\bfy,\bfu\},\bfl\}.
\end{equation}
While being simple and robust, the projected gradient method is not a method of choice in our setting due to the large number of iterations needed to obtain a reasonable solution~\cite{Dunn1980}.
Similarly, the convergence of other first-order methods such as AdaGrad~\cite{Duchi2011}, which uses a diagonal Hessian approximation, is also not fast enough for large-scale problems.
On the other extreme using a projected Newton step (obtained using $\bfH_k= \nabla^2 f(\bfx_k)$) is intractable due to the computational cost of solving the large-scale quadratic program involved in the projection~\eqref{eq:projHess}.

In this paper, we propose PNKH-B, an iterative method for large-scale bound-constrained optimization. PNKH-B builds a low-rank approximation of the (approximate) Hessian by using Lanczos tridiagonalization to compute a basis of the Krylov subspace defined by the (approximate) Hessian and gradient at the $k$th step.
PNKH-B then uses the low-rank approximation to
compute the search direction and define the norm of the projection. To ensure that the norm of the projection is well-defined, a small shift is added in the orthogonal complement of the subspace spanned by the low-rank approximation of the Hessian. By construction, both matrices induce the same metric on the Krylov subspace.
Therefore, we consider PNKH-B to be a generalized one-metric method. Under mild assumptions, we prove global convergence of our method.
A main contribution is an interior-point method for computing the projection step~\eqref{eq:projHess} that, by exploiting the structure of $\tilde{\bfH}_k$, only adds negligible computational costs compared to~\eqref{eq:ProjEuclidean}. 
Although not shown here, our method can be straightforwardly extended to other low-rank representations, e.g., arising in L-BFGS~\cite{Byrd:1995iv,dener2019accelerating,Gilbert2006}. However, we focus on inexact Newton methods  because we aim to solve large-scale problems using as few evaluations as possible and quasi-Newton methods generally require more iterations to convergence than inexact Newton methods~\cite[Chapter 6]{Haber2015}.
We demonstrate the effectiveness of PNKH-B on three numerical experiments that are motivated by PDE parameter estimation, machine learning, and image reconstruction.

The remainder of this paper is organized as follows. In \Cref{sec:related_work}, we give an overview of the related work. In \Cref{sec:projectedNewtonMethod}, we describe our PNKH-B. In \Cref{sec:proof_convergence}, we provide theoretical guarantees. In \Cref{sec:exp_results}, we present three experimental results to illustrate the effectiveness of our method. We conclude with a discussion and future outlooks in \Cref{sec:conclusion}.

\section{Related Work}\label{sec:related_work}
Projected inexact Newton and quasi-Newton schemes are among the most effective and commonly used solvers for constrained large-scale optimization problems such as~\eqref{eq:optProb}. They have been studied and applied extensively in the past four decades, e.g., \cite{Bertsekas1982,Byrd:1995iv,Schmidt2009,schmidt2012}. In this section, we give a brief overview of the schemes that bear similarities with our approach. 

\begin{figure}
\centering
\includegraphics[width=1\textwidth]{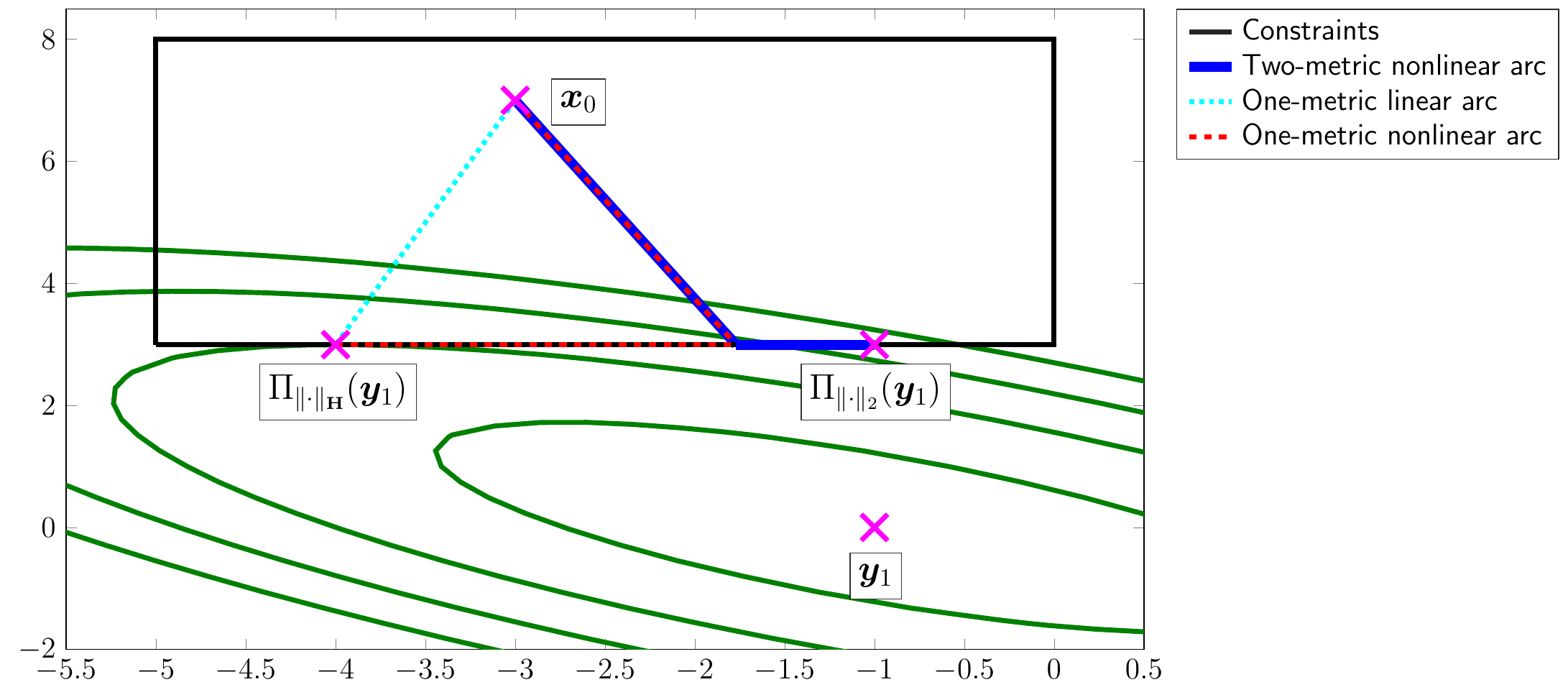}
\caption{An illustration of the first iterations of different methods on the quadratic optimization problem (\ref{eq:toy_problem}), where $\bfH=[1,1;1,2]$, $\bfb=[1;1]$, $\bfl=[-5;3]$, $\bfu=[0;8]$, $\bfx_0=[-3;7]$, $\bfy_1 = \bfx_0 - \bfH^{-1}\nabla f(\bfx_0)=- \bfH^{-1}\bfb=[-1;0]$ is the updated variable before projection, $\Pi_{\| \cdot \|_{\bfH}} ({\bfy_1})=[-4;3]$ is the projection with the Hessian metric and is the optimal solution, and $\Pi_{\|\cdot\|_{2}} ({\bfy_1})=[-1;3]$ is the projection with the Euclidean metric. The linear/piecewise linear line search arcs for one/two-metric methods are shown. The one-metric piecewise linear arc, which is used in our proposed PNKH-B, is the best one as it searches along the boundary and gives the optimal solution. The one-metric linear arc is less natural, does not search along the boundary and gives suboptimal iterate whenever step size 1 is not used. Finally, the two-metric piecewise linear arc searches for the opposite direction of the one-metric piecewise linear arc. It gives a suboptimal iterate $\Pi_{\|\cdot\|_{2}} ({\bfy_1})$ and it will be stuck at $\Pi_{\|\cdot\|_{2}} ({\bfy_1})$ even when the exact Hessian is used, i.e., it generates $\Pi_{\|\cdot\|_{2}} ({\bfy_k})=\Pi_{\|\cdot\|_{2}} ({\bfy_1})$ for all $k\geq 2$. \label{fig:projections}}
\end{figure}	

One property that can be used to group existing schemes is the metric used to determine the search direction and the projection.
We refer to schemes that use the same metric for both steps (e.g.,~\eqref{eq:iterHess}) as one-metric schemes and schemes that use different metrics for the two steps as two-metric schemes. Another distinguishing feature is the order in which projections and line searches are performed.
Schemes that project each line search iterate (e.g.,~\eqref{eq:iterHess}) in general lead to a piecewise linear arc, while applying the projection only once before the line search yields a linear arc. 
In the following, we will review existing methods according to these choices and provide an example to highlight their differences.

An extensively studied two-metric scheme \cite{Bertsekas1982,Gafni1984} uses a search direction induced by an approximated Hessian norm and a projection with respect to the Euclidean metric. That is, its formulation in each iteration is given by~\eqref{eq:iterHess} and ~\eqref{eq:projHess} except that the projection uses the Euclidean metric as this provides a closed-form to the projection problem using~\eqref{eq:ProjEuclidean}. Its line search induces a piecewise linear arc, see \Cref{fig:projections}. However, its convergence is not guaranteed in general, see \Cref{example}, \cite{Bertsekas1982} and \cite[Chapter~5.5.1]{Kelley:1999vd}. The global convergence of this approach for convex problems with linear constraints was proven by \cite{Bertsekas1982,Gafni1984} when a variable partitioning scheme is used. It partitions the components of the $k$th iterate into an active set in which the components are at or close to the boundary of the feasible set and an inactive set in which the components are in the interior of the feasible set. A search direction induced by the Euclidean norm is used for the active components and a search direction induced by $\| \cdot\|_{\bfH_k}$ is used for the inactive components. They also prove local superlinear convergence under certain conditions. Since then, variable partitioning has become a recurring theme for two-metric schemes \cite{Haber2015,herring2019,KimEtAl2010,landi2008projected,landi2012improved,landi2013nptool,schmidt2012}. Although the projection of the convergent two-metric scheme~\eqref{eq:ProjEuclidean} can be computed immediately, it requires appropriate scaling for the Euclidean norm induced search direction before combining the two search directions. 
Moreover, when many constraints are active,  two-metric schemes essentially become projected gradient methods. A specific drawback in Newton-Krylov schemes for large-scale problems is that the partitioning of the variables complicates the design of effective preconditioners. Given a preconditioner $\bfM_k$ for the approximate Hessian $\bfH_k$ and $\bfP_k$ the projection operator onto the inactive set at the $k$th step, the most natural choice is to precondition $\bfP_k^\top \bfH_k \bfP_k$ by $\bfP_k^\top \bfM^{-1}_k \bfP_k$. Since it is intractable to compute $(\bfP_k^\top \bfM^{-1}_k \bfP_k)^{-1}$, one might use the approximation $\bfP_k^\top \bfM_k \bfP_k$. However, note that $(\bfP_k^\top \bfM^{-1}_k \bfP_k)^{-1} \neq \bfP_k^\top \bfM_k \bfP_k$ in general.

Another well-studied approach is the one-metric scheme with linear arc. It is generally performed as follows. 
At the $k$th iteration, it (approximately) solves~\eqref{eq:iterHess} with $\mu_k=1$ to obtain a projection. Then it performs a line search along the straight line connecting the current iterate $\bfx_k$ and the projection; this linear line search is done in order to limit the number of solving costly projections.
This scheme is studied with different approximations of the Hessian, solvers for the projection~\eqref{eq:iterHess} with $\mu_k=1$ or backtracking schemes to determine the next iterate $\bfx_{k+1}$. For instance, the widely-applied L-BFGS-B \cite{Byrd:1995iv,Morales2011} uses the limited-memory BFGS matrix for $\bfH_k$ and approximately solves the projection~\eqref{eq:iterHess} with $\mu_k=1$ without any constraints, then it truncates the path toward the solution in order to satisfy the constraints. Finally it backtracks along the straight line to obtain $\bfx_{k+1}$. Other variants of this one-metric method with linear arc include \cite{Becker2012,becker2018,Hager2006,lee2014,Schmidt2009}. Although the consistent choice of metric could generate a better update direction than the two-metric scheme, it results in a suboptimal iterate which does not lie in the boundary whenever a step size of 1 is not used. Also, because the line search is just simply along the straight line connecting the previous iterate $\bfx_k$ and the projection, it can result in an inferior iterate $\bfx_{k+1}$ when compared to a more natural line search along a piecewise linear arc used in our method, see \Cref{fig:projections}. 
\begin{example}\label{example}
We illustrate the differences between one-metric and two-metric schemes with linear and piecewise linear arcs, respectively, using a two-dimensional quadratic program
\begin{equation}\label{eq:toy_problem}
    \min_{\bfx} \frac{1}{2}\bfx^\top \bfH \bfx + \bfb^\top \bfx \quad \text{subject to} \quad \bfl \leq \bfx \leq \bfu.
\end{equation}
The first iteration before projection of the one-metric method with piecewise linear arc reads
\begin{equation*}
    \bfy_1(\mu) = \bfx_0 - \mu \bfH^{-1}\nabla f(\bfx_0) = (1-\mu)  \bfx_0 - \mu \bfH^{-1}\bfb,
\end{equation*}
where $\mu$ is a step size determined by a backtracking line search scheme. The projection with the Hessian metric is given by
\begin{equation}\label{eq:motivation_exmaple}
\begin{split}
    \Pi_{\| \cdot \|_{\bfH}} (\bfy_1(\mu)) 
    & = \argmin_{\bfl \leq \bfz\leq \bfu} \frac{1}{2} \bfz^\top \bfH \bfz - (1-\mu) \bfz^\top \bfH \bfx_0 + \mu \bfb^\top \bfz.
    \end{split}
\end{equation}
When $\mu=1$, i.e., the first step of the backtracking line search, the projection problem is equivalent to the original optimization problem. So the backtracking line search stops at the first step and the one-metric method with the nonlinear line search converges in one iteration. This is because the Hessian metric projection is consistent with the steepest descent direction $\bfH^{-1} \nabla f$ induced by the Hessian metric. 
If for a non-quadratic objective function, the initial step size is not accepted, then the nonlinear and linear arc lead to different iterates; see \Cref{fig:projections}. 
Solving the projection problem with the Euclidean metric leads to a suboptimal projection at which the scheme stagnates in the absence of any of the remedies outlined above.
\end{example}

\section{PNKH-B}\label{sec:projectedNewtonMethod}
In this section, we introduce our PNKH-B. In \Cref{subsec:outline_algo}, we present an outline of the algorithm. At each iteration, each backtracking line search requires computing a projection, which is a quadratic program. In \Cref{sec:IPM}, we present the derivation and implementation of an interior point method to solve the quadratic program effectively. In \Cref{sec:activeSet}, we present two variants of our PNKH-B which incorporate the current estimates of the active set.

\subsection{Outline of PNKH-B}\label{subsec:outline_algo}
Our projected Newton-Krylov method with a low-rank approximated Hessian metric (PNKH-B) is a generalized one-metric method that approximately solves the bound-constrained optimization problem~\eqref{eq:optProb}. At each iteration PNKH-B is given by~\eqref{eq:iterHess} and~\eqref{eq:projHess}. 

The global convergence of PNKH-B is guaranteed under standard assumptions; see \Cref{sec:proof_convergence}. We set $\bfH_k$ as a low-rank approximation of the (approximate) Hessian at $\bfx_k$ generated by Lanczos tridiagonalization \cite{lanczos1950}. Specifically, the Krylov subspace is defined by the (approximate) Hessian and gradient at $\bfx_k$. The low-rank approximation is given by $\bfH_k=\bfV_k \bfT_k \bfV_k^\top$, where $\bfV_k \in \mathbb{R}^{ n \times l}$ has orthonormal columns, $\bfT_k \in \mathbb{R}^{l \times l}$ is tridiagonal and $l$ is the rank of the low-rank approximation. 
We slightly abuse notation and denote the pseudoinverse $\bfV_k \bfT_k^{-1} \bfV_k^\top$ by $\bfH_k^{-1}$ in order to be consistent with the conventional notation used in Newton's method. The positive definite matrix $\tilde{\bfH}_k$ used to define the projection norm is obtained by applying a shift in the orthogonal complement of the Krylov subspace to $\bfH_k$. Lanczos tridiagonalization is suitable for large-scale problems because is does not require the explicit (approximate) Hessian $\bfG_k$, but only the function $g_k: \bfy \mapsto \bfG_k\bfy$. Using the low-rank approximation, we effectively compute the pseudoinverse $\bfH_k^{-1}$ and the projection $\Pi_{\| \cdot \|_{\tilde{\bfH}_k}}(\cdot)$, which has to be done once for each line search. The projection problem is solved using an interior point method, which exploits the low-rank approximation effectively and scales only linearly with the number of variables. The interior-point method will be discussed in \Cref{sec:IPM}. The outline of our PNKH-B is summarized in \Cref{alg:projectedNewtonCG_index}.

\subsection{Interior Point Method}\label{sec:IPM}
In this section, we present the derivation and effective implementation of the interior point method tailored to exploit the low-rank structure in~\eqref{eq:projHess}.
\paragraph{Derivation}
We use a standard primal-dual interior point method to solve the projection problem~\eqref{eq:projHess}, which we derive following the outline in \cite[Chapter~16.6]{nocedal2006}. To obtain $\bfx_{k+1}$, we re-formulate~\eqref{eq:projHess} as
\begin{equation*}
    \min_{\bfz,\bfw} \hf \bfz^\top \tilde{\bfH}_k \bfz-\bfz^\top \tilde{\bfH}_k \bfy_{k+1} \quad \text {subject to} \quad \bfK \bfz - \bfb = \bfw \text{ and } \bfw \geq \mathbf{0},
\end{equation*}
where $\tilde{\bfH}_k=\bfV_k \bfT_k \bfV_k^\top + c \bfU_k \bfU_k^\top$ is the low-rank approximation of the Hessian plus a small shift $c>0$ in the orthogonal complement spanned by $\bfU_k \in \mathbb{R}^{n \times (n-l)}$, $\bfw\in\R^{2n}$ is a slack vector and
\begin{equation*}
    \bfK = \begin{bmatrix} \bfI \\ -\bfI \end{bmatrix} \; \text{ and } \; \bfb = \begin{bmatrix} \bfl \\ -\bfu \end{bmatrix}.
\end{equation*}
From this, we obtain the KKT conditions
\begin{subequations}\label{KKT_slack}
\begin{align}
    \tilde{\bfH}_k \bfz-\tilde{\bfH}_k\bfy_{k+1}-{\bfK}^\top \bm{\lambda}=\mathbf{0},&\\
    {\bfK} \bfz - \bfb - \bfw = \mathbf{0},& \\
    w_i \lambda_i = 0,& \;\text{ for } i=1,...,2n, \label{eq:pre_perturb}\\
     \bfw \geq \mathbf{0}\; , \;\bm{\lambda} \geq \mathbf{0}, &
\end{align}
\end{subequations}
where $\bm{\lambda} \in \mathbb{R}^{2n}$ is a vector of Lagrange multipliers. Since the problem (\ref{eq:projHess}) is convex and the interior of the feasible set is non-empty, Slater's condition is satisfied and hence the KKT conditions are necessary and sufficient.
As usual in interior point methods, we consider the perturbed KKT conditions
\begin{equation}\label{perturb_KKT}
    F(\bfz,\bfw,\bm{\lambda};\sigma,\xi) = \begin{bmatrix} \tilde{\bfH}_k \bfz-\tilde{\bfH}_k \bfy_{k+1}-{\bfK}^\top \bm{\lambda} \\ {\bfK} \bfz - \bfb - \bfw \\ \bfW\mathbf{\Lambda} \bfe- \sigma \xi \bfe \end{bmatrix}=\mathbf{0},
\end{equation}
where $\bfW=\text{diag}(\bfw)$, $\mathbf{\Lambda}=\text{diag}(\bm{\lambda})$, $\bfe\in\R^{2n}$ is a vector of ones, $\sigma \in [0,1]$ and $\xi=\bfw^\top \bm{\lambda}/(2n)$ is the duality measure. The solutions of (\ref{perturb_KKT}) define the central path and tend to the solution of~\eqref{KKT_slack} \cite[Section~16.6]{nocedal2006}.

We then apply Newton's method to find the root of the system (\ref{perturb_KKT}). At the $j$th iteration of Newton's method, the step is obtained by solving
\begin{equation}\label{full_KKT_iter}
    \begin{bmatrix} \tilde{\bfH}_k & 0 & -{\bfK}^\top \\ {\bfK} & -\bfI & 0 \\ 0 & \mathbf{\Lambda}_j & \bfW_j \end{bmatrix} \begin{bmatrix} \Delta \bfz_j \\ \Delta \bfw_j \\ \Delta \bm{\lambda}_j 
    \end{bmatrix} = \begin{bmatrix} -\bfr_j \\ -\bfv_j \\ -\bfW_j \mathbf{\Lambda}_j \bfe + \sigma \xi_j \bfe \end{bmatrix},
\end{equation}
which is obtained by differentiating $F$ in (\ref{perturb_KKT}) and setting
\begin{align*}
    \bfr_j & = \tilde{\bfH}_k \bfz_j - \tilde{\bfH}_k \bfy_{k+1} - {\bfK}^\top \bm{\lambda}_j,  \\
    \bfv_j & = {\bfK} \bfz_j - \bfb - \bfw_j. 
\end{align*}
Here, $\bfr_j$ and $ \bfv_j $ are the dual and primal residuals, respectively. After computing $\Delta \bfz_j$, $\Delta \bfw_j$ and $\Delta \bm{\lambda}_j$, the update of the interior point method is
\begin{equation*}
    (\bfz_{j+1}, \bfw_{j+1}, \bm{\lambda}_{j+1}) = (\bfz_j, \bfw_j, \bm{\lambda}_j) + \beta_j (\Delta \bfz_j, \Delta \bfw_j, \Delta \bm{\lambda}_j),
\end{equation*}
where $\beta_j = \text{min} (\beta^{\text{pri}}_{j}, \beta^{\text{dual}}_{j})$ and 
\begin{align*}
    \beta_{j}^{\text{pri}} &= \text{max} \{ \beta \in (0,1] : \bfw_j + \beta \Delta \bfw_j \geq (1-\tau) \bfw_j \}, \\
    \beta_{j}^{\text{dual}} &= \text{max} \{ \beta \in (0,1] : \bm{\lambda}_j + \beta \Delta \bm{\lambda}_j \geq (1-\tau) \bm{\lambda}_j \}.
\end{align*}
The parameter $\tau \in (0,1]$ controls the distance to the boundary of the feasible set. 
While there are other schemes to determine the step size  (see, e.g., \cite{curtis2007}), this simple choice has been effective in our experiments.
\paragraph{Efficient Implementation}\label{sec:IPM_implementation}
The most crucial step of the interior point method is the computation of the solution of the step in~\eqref{full_KKT_iter}. Our implementation exploits the low-rank structure of $\bfH_k$ to directly solve the linear system with $\CO(nl^2)$ floating point operations, where $l$ is the rank of the low-rank approximation; see \Cref{alg:Interior} for an overview.

To compute the update, we first multiply the third equation of (\ref{full_KKT_iter}) by $\mathbf{\Lambda}_j^{-1}$ and add it to the second equation of \eqref{full_KKT_iter} and obtain
\begin{equation}\label{eq:aug}
    \begin{bmatrix} \tilde{\bfH}_k &  -{\bfK}^\top \\  {\bfK} & \mathbf{\Lambda}^{-1}_j\bfW_j \end{bmatrix} \begin{bmatrix} \Delta \bfz_j \\ \Delta \bm{\lambda}_j 
    \end{bmatrix} = \begin{bmatrix} -\bfr_j \\ -\bfv_j - \bfw_j + \sigma \xi_j \mathbf{\Lambda}^{-1}_j \bfe \end{bmatrix}.
\end{equation}
Multiplying the second equation of (\ref{eq:aug}) by ${\bfK}^\top \bfW^{-1}_j \mathbf{\Lambda}_j$ and adding it to the first equation, we obtain
\begin{equation}\label{eq:the_normal}
    (\tilde{\bfH}_k+{\bfK}^\top \bfW^{-1}_j\mathbf{\Lambda}_j {\bfK}) \Delta \bfz_j = -\bfr_j + {\bfK}^\top \bfp_j,
\end{equation}
where $\bfp_j=\bfW^{-1}_j \mathbf{\Lambda}_j (-\bfv_j-\bfw_j+\mathbf{\Lambda}^{-1}_j\sigma \xi_j \bfe)$. Recall that $\tilde{\bfH}_k=\bfV_k \bfT_k \bfV^\top_k+c \bfU_k \bfU_k^\top$ is the low-rank Hessian approximation plus a shift in the orthogonal complement of the Krylov subspace, where $c>0$. This shift is only applied to the projection matrix but not the matrix of the search direction $\bfH_k^{-1}\nabla{f}(\bfx_k)$. This is because the inverse of the shift $c^{-1}$ is large and will dominate the search direction.
The right hand side of \eqref{eq:the_normal} can be computed explicitly in $\CO(n)$ operations. 
Since $\bfU_k\bfU_k^\top = \bfI - \bfV_k\bfV_k^\top$, we can compute $\tilde{\bfH}_k$ as
\begin{equation*}
    \tilde{\bfH}_k=\bfV_k \bfT_k \bfV^\top_k+c \bfU_k \bfU_k^\top=\bfV_k (\bfT_k-c\bfI_l) \bfV^\top_k+c \bfI
\end{equation*}
without explicitly computing $\bfU_k$.
Defining $\bfE_j = c\bfI + {\bfK}^\top \bfW^{-1}_j\mathbf{\Lambda}_j {\bfK}$ and noticing that $\bfE_j$ is diagonal and invertible, we can use the Woodbury matrix identity to invert the left hand side of \eqref{eq:the_normal}. Specifically
\begin{align}\label{eq:Woodbury}
\begin{split}
    (\tilde{\bfH}_k+ \bfE_j)^{-1} 
    &= (\bfV_k (\bfT_k-c\bfI_l) \bfV^\top_k +\bfE_j)^{-1}  \\
    & = \bfE_j^{-1} - \underbrace{\bfE_j^{-1} \bfV_k}_{\in \mathbb{R}^{n\times l}} \underbrace{((\bfT_k-c\bfI_l)^{-1}+\bfV_k^\top \bfE_j^{-1} \bfV_k)^{-1}}_{\in \mathbb{R}^{l\times l}} \underbrace{\bfV_k^\top \bfE_j^{-1}}_{\in \mathbb{R}^{l \times n}},  \\
    \end{split}
\end{align}
where $l$ is the rank of the low-rank approximation. From (\ref{eq:Woodbury}), we see that it requires $\CO(nl^2)$ flops to compute the solution $\Delta \bfz_j$ of (\ref{eq:the_normal}).
After obtaining $\Delta \bfz_j$, we substitute $\Delta \bfz_j$ into (\ref{full_KKT_iter}) and (\ref{eq:aug}) and obtain
\begin{equation*}
    \Delta \bm{\lambda}_j = \bfp_j - \bfW^{-1}_j\mathbf{\Lambda}_j {\bfK} \Delta \bfz_j, 
    \quad \text{ and } \quad 
    \Delta \bfw_j = {\bfK} \Delta \bfz_j + \bfv_j,
\end{equation*}
whose computation require $\CO(n)$ flops.

Overall, exploiting the structure of $\tilde{\bfH}_k$, the interior point method requires $\CO(nl^2)$ flops per iteration, where $n$ is the number of variables.  
\begin{algorithm}[t]
	\begin{algorithmic}[1]
	 \STATE Inputs: low-rank approximation $\bfV_k \bfT_k \bfV^\top_k \approx \nabla^2 f(\bfx_k)$, point to be projected $\bfy_{k+1}$, initial guess $\bfz_0 \in \R^n$, $\bm{\lambda}_0, \bfw_0 \in \R^{2n}$, $\tau \in (0,1]$, $\text{tol}>0$
	 \FOR{$j=0, 1,2,\ldots$}
	    \STATE compute $\bfr_j  = \tilde{\bfH}_k\bfz_j - \tilde{\bfH}_k\bfy_{k+1} - {\bfK}^\top \bm{\lambda}_j$
        \STATE compute $\bfv_j  = {\bfK} \bfz_j - \bfb - \bfw_j$
		\STATE compute $\bfp_j = \bfW^{-1}_j \mathbf{\Lambda}_j (-\bfv_j-\bfw_j+\mathbf{\Lambda}^{-1}_j \sigma \xi_j \bfe)$
		\STATE compute $\bfE_j = c\bfI + {\bfK}^\top \bfW^{-1}_j\mathbf{\Lambda}_j {\bfK}$
		\STATE compute $\Delta \bfz_j = \big( \bfE_j^{-1} - \bfE_j^{-1} \bfV_k ((\bfT_k-c\bfI_l)^{-1}+\bfV_k^\top \bfE_j^{-1} \bfV_k)^{-1} \bfV_k^\top \bfE_j^{-1} )(-\bfr_j + {\bfK}^\top \bfp_j)$
		\STATE   compute  $\Delta \bm{\lambda}_j = \bfp_j - \bfW^{-1}_j\mathbf{\Lambda}_j {\bfK} \Delta \bfz_j$
        \STATE   compute  $\Delta \bfw_j = {\bfK} \Delta \bfz_j + \bfv_j$
        \STATE    compute $\beta_j = \text{min} (\beta^{\text{pri}}_\tau, \beta^{\text{dual}}_\tau)$, where $\beta_{\tau}^{\text{pri}} = \text{max} \{ \beta \in (0,1] : \bfw_j + \beta \Delta \bfw_j \geq (1-\tau) \bfw_j \}$ and $\beta_{\tau}^{\text{dual}} = \text{max} \{ \beta \in (0,1] : \bm{\lambda}_j + \beta \Delta \bm{\lambda}_j \geq (1-\tau) \bm{\lambda}_j \}$
        \STATE update the variables $(\bfz_{j+1}, \bfw_{j+1}, \bm{\lambda}_{j+1}) = (\bfz_j, \bfw_j, \bm{\lambda}_j) + \beta_j (\Delta \bfz_j, \Delta \bfw_j, \Delta \bm{\lambda}_j)$
        \IF{$\|\bfr_j\|_2 < \text{tol} \text{ and } \|\bfv_j\|_2 < \text{tol}$}
         	\STATE break
         \ENDIF
	 \ENDFOR
	 \STATE Output: $\bfz_{j+1}$ approximate projection of $\bfy_{k+1}$ onto $C$ 
	\end{algorithmic}
	\caption{Interior Point Method for the Projection Problem~\eqref{eq:projHess}}
	\label{alg:Interior}
\end{algorithm}

\subsection{Incorporating Estimates of the Active Set}\label{sec:activeSet}
We introduce two variants of PNKH-B that seek to accelerate the convergence by using estimates of the active set.
The intuitive idea is to ignore coordinate dimensions associated with constraints that are currently active during the construction of the low-rank approximation $\bfH_k$.
To this end, we partition $\bfx_k$ into active and inactive components.
To update the inactive coordinates, we exploit curvature information, and to update the active coordinates, we use a scaled projected gradient descent step. 
Our procedure and estimation of the active coordinates are essentially the same as in the two-metric schemes~\cite{Bertsekas1982,Haber2015,schmidt2012}, which crucially rely on this step to ensure convergence.
Being a generalized one-metric scheme, the convergence theory of PNKH-B applies both with and without partitioning. However, in practice it can be advantageous to use estimates of the active set because with variable partitioning, the active/inactive set information is incoporated into the low-rank approximation. Thus the search direction can capture more feasible directions.

At the $k$th iteration, let $\mathcal{A}_k \subset\{1,2,\ldots,n\}$ contain the indices of the components that are estimated to be active and let $m = |\mathcal{A}_k|$. We denote with $\bfR_k \in \R^{m\times n}$ and $\bfP_k\in\R^{(n-m)\times n}$ the projection operators onto the active and inactive set, respectively.
For example, $\bfR_k$ can be constructed by selecting the rows of an identity matrix associated with $\mathcal{A}_k$.
We shall discuss two common choices for constructing $\calA_k$ below.
Given $\bfP_k$ and $\bfR_k$, the intermediate step in~\eqref{eq:iterHess} is
\begin{equation}\label{eq:updatePartition}
    \bfy_{k+1} = \bfx_k - \mu_k \left( \bfP_k^\top \bfF_k^{-1} \bfP_k\nabla f(\bfx_k) + \nu_k^{-1} \bfR_k^\top \bfR_k \nabla f(\bfx_k) \right).
\end{equation}
Here, $\bfF_k$ is a rank-$l$ approximation of the projected (approximate) Hessian $\bfP_k \bfG_k \bfP_k^\top$ and the constant $\nu_k>0$ is used to balance the sizes of both steps.
In practice, this number is often chosen based on the norm of the step for the inactive components. We set $\nu_k=\| \bfR_k \nabla f(\bfx_k)\|_{\infty} / \| \bfF_k^{-1} \bfP_k \nabla f(\bfx_k)\|_\infty$ in our experiments.
One can verify that this leads to the PNKH-B scheme with the Hessian approximation
\begin{equation}\label{eq:Hess_with_partitioning}
    \bfH_k = \begin{pmatrix} \bfP_k^\top & \bfR_k^\top \end{pmatrix}  \begin{pmatrix} \bfF_k & 0 \\ 0 & \nu_k \bfI_m \end{pmatrix} 
    \begin{pmatrix} \bfP_k \\ \bfR_k \end{pmatrix}.
\end{equation}
We use the separability introduced by this construction in the projection, which decouples into using~\eqref{eq:ProjEuclidean} on the active components and using the interior point method on the inactive components.

We obtain two variants of PNKH-B that differ only by the strategy to estimate active and inactive variables.
\paragraph{PNKH-B (boundary index)} Perhaps the most straightforward estimate of the active set is to choose the components that are within an $\epsilon$ margin around the boundary, where $\epsilon>0$ i.e.,
\begin{equation}
    \calA_k^{\rm bound} =  \{ i \, : \, (\bfx_k)_i \leq l_i + \epsilon \; \text{ or } \; (\bfx_k)_i \geq u_i-\epsilon \}.
\end{equation}
This choice has been used successfully in~\cite{Haber2015}.

\paragraph{PNKH-B (augmented index)} As an alternative active set estimation scheme we use the one proposed in~\cite{Bertsekas1982,schmidt2012}. The idea is that in addition to the $\epsilon$ margin, we consider the sign of the partial derivative so that curvature information is used for those constraints predicted to become inactive, i.e.,
\begin{equation}\label{eq:augmented_index}
    \calA_k^{\rm aug} =  \left\{ i \ : \ [(\bfx_k)_i \leq l_i + \epsilon\, \wedge\, \partial_i f(\bfx_k) > 0] \ \text{ or } \ [(\bfx_k)_i \geq u_i-\epsilon\, \wedge\,  \partial_i f(\bfx_k) < 0]\right\}.
\end{equation}
\begin{algorithm}[t]
\begin{algorithmic}[1]
 \STATE Inputs: Initial guess $\bfx_0 \in C$, tolerance xtol and gtol, line search parameter $\alpha \in (0,1)$, and the rank of the low-rank approximation $l$
 \FOR{$k=0, 1,2,\ldots$}
    \STATE select estimate of active set, $\calA_k \in \{\emptyset, \calA_k^{\rm bound}, \calA_k^{\rm aug} \}$, and build projection matrices $\bfP_k$ and $\bfR_k$. 
	\STATE compute $f(\bfx_k)$, $\nabla f(\bfx_k)$ and (approximate) Hessian $\bfG_k \approx \nabla^2 f(\bfx_k)$
	\STATE compute the Lanczos tridiagonalization $\bfF_k =  \bfV_k \bfT_k \bfV_k^\top \approx \bfP_k \bfG_k\bfP_k^\top$ with initial vector $-\bfP_k \nabla f(\bfx_k)$ (use matrix-free implementation)
    \STATE compute the Hessian approximation $\bfH_k$ in~\eqref{eq:Hess_with_partitioning} and the search direction $-\bfH_k^{-1}\nabla f(\bfx_k)$
    \STATE set $\mu=1$
	\FOR{$i=0,1,2,\ldots$}
    \STATE solve the projection $\bfx_t = \Pi_{\| \cdot \|_{\tilde{\bfH}_k}} (\bfx_k - \mu \bfH_k^{-1}\nabla f(\bfx_k))$ (see \Cref{sec:IPM})
     \IF{$f(\bfx_t) < f(\bfx_k)+ \alpha \nabla f(\bfx_k)^\top (\bfx_t-\bfx_k)$ }
     	\STATE set $\bfx_{k+1} = \bfx_t$ and break
     \ELSE
     	\STATE set $\mu = \mu/2$
     \ENDIF
	\ENDFOR
	\IF{$\| \bfx_{k+1}-\bfx_{k}\|_2/\| \bfx_{k}\|_2<\text{xtol}$ or norm of projected gradient $<$ gtol}
	\STATE break
	\ENDIF
     \IF{$\mu=\mu_{\text{old}}$}
    \STATE set $\mu=\text{min}(1.5*\mu,1)$
     \ENDIF
 \ENDFOR
 \STATE Output: approximate solution $\bfx_{k+1} \in C$. 
\end{algorithmic}
\caption{Outline of PNKH-B for solving~\eqref{eq:optProb}}
\label{alg:projectedNewtonCG_index}
\end{algorithm}

\section{Proof of Global Convergence\label{sec:proof_convergence}}
In this section, we introduce and prove the theorem, which guarantees the global convergence of PNKH-B under mild assumptions. We first state the main theorem.
\begin{theorem}[Global Convergence]\label{thm:global_convergence}
Suppose
\begin{enumerate}
    \item $f$ is twice differentiable, and $\nabla f$ is Lipschitz continuous.
    \item $\inf_{\bfx} \{ f(\bfx) | \bfx \in C \}$ is attained, and $C$ is a box.
    \item The norm of projection in our method is induced by $\tilde{\bfH}_k=\bfH_k+c\bfU_k \bfU_k^\top = \bfV_k \bfT_k \bfV_k^\top + c\bfU_k \bfU_k^\top$, the low-rank approximation of the Hessian using Lanczos tridiagonalization plus a positive shift in the orthogonal complement of the Krylov subspace. Moreover, it is symmetric and uniformly positive definite, i.e. $\tilde{\bfH}_k \succeq s\bfI$ for some $s>0$ and for all $k\in \mathbb{N}$.
\end{enumerate}
Then the sequence $\{ {\bfx_k} \}_k$ generated by PNKH-B converges to a stationary point of \eqref{eq:optProb} regardless of the choice of the starting point $\bfx_0\in C$.
\end{theorem}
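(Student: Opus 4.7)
The plan is to combine three ingredients that are standard for projected Newton-type methods: the variational characterization of the projection onto $C$, the Armijo backtracking line search of \Cref{alg:projectedNewtonCG_index}, and the Lipschitz continuity of $\nabla f$. The generalized one-metric property enters at one crucial step, where it turns the variational inequality for $\bfx_{k+1}$, phrased naturally in the $\tilde{\bfH}_k$ inner product, into a descent condition on $\nabla f$ itself.

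First I would derive the descent inequality. By first-order optimality of $\bfx_{k+1} = \Pi_{\|\cdot\|_{\tilde{\bfH}_k}}(\bfy_{k+1})$ for the quadratic program~\eqref{eq:projHess}, one has $\langle \tilde{\bfH}_k(\bfx_{k+1}-\bfy_{k+1}), \bfz-\bfx_{k+1}\rangle \geq 0$ for every $\bfz\in C$. Choosing $\bfz=\bfx_k$ and expanding $\bfy_{k+1}$ gives, with $\bfd_k = \bfx_{k+1}-\bfx_k$,
\[
\mu_k\,\langle \tilde{\bfH}_k\bfH_k^{-1}\nabla f(\bfx_k),\bfd_k\rangle \,\leq\, -\langle \tilde{\bfH}_k\bfd_k,\bfd_k\rangle \,\leq\, -s\|\bfd_k\|_2^2.
\]
The key observation I would then use is that the Lanczos process is initialized with (the possibly projected) $\nabla f(\bfx_k)$, so that $\nabla f(\bfx_k)\in \mathrm{range}(\bfV_k)$ and $\bfV_k\bfV_k^\top \nabla f(\bfx_k) = \nabla f(\bfx_k)$; combined with $\bfU_k^\top \bfV_k = 0$ this produces the crucial identity
\[
\tilde{\bfH}_k\bfH_k^{-1}\nabla f(\bfx_k) \,=\, \bfV_k\bfT_k\bfV_k^\top \bfV_k\bfT_k^{-1}\bfV_k^\top\nabla f(\bfx_k) \,=\, \nabla f(\bfx_k),
\]
so the previous display simplifies to $\mu_k\,\nabla f(\bfx_k)^\top \bfd_k \leq -s\|\bfd_k\|_2^2$, which is exactly the form needed to feed the Armijo test $f(\bfx_{k+1})-f(\bfx_k)\leq \alpha\nabla f(\bfx_k)^\top\bfd_k$.

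Next, combining this descent inequality with the Lipschitz upper bound $f(\bfx_{k+1})-f(\bfx_k)\leq \nabla f(\bfx_k)^\top\bfd_k + \tfrac{L}{2}\|\bfd_k\|_2^2$ from Assumption~1, I would show that the Armijo condition is satisfied whenever $\mu_k$ is small enough, so backtracking halts at some $\mu_k\geq\underline{\mu}>0$ that depends only on $\alpha,L,s$. Summing the resulting sufficient decrease $f(\bfx_k)-f(\bfx_{k+1})\geq c\|\bfd_k\|_2^2$ and using that $f$ is bounded below on $C$ (Assumption~2) gives $\sum_k\|\bfd_k\|_2^2<\infty$, hence $\bfd_k\to 0$. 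For any limit point $\bar\bfx$ of $\{\bfx_k\}\subset C$, I would then pass to a limit in the variational inequality tested against an arbitrary $\bfz\in C$: the $\langle\tilde{\bfH}_k\bfd_k,\bfz-\bfx_{k+1}\rangle$ term vanishes while the other survives, delivering $\langle \nabla f(\bar\bfx),\bfz-\bar\bfx\rangle\geq 0$, i.e.\ first-order stationarity for~\eqref{eq:optProb}.

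The main obstacle I foresee is this last limit step: it needs a uniform upper bound on $\|\tilde{\bfH}_k\|$, whereas Assumption~3 only supplies a uniform lower bound. Such an upper bound should follow because $\bfV_k$ has orthonormal columns, $\|\bfT_k\|$ is controlled by $\|\bfG_k\|$ along the bounded trajectory, and the shift $c$ is fixed a priori; if this implicit bound is deemed insufficient it can be promoted to an explicit hypothesis $\tilde{\bfH}_k\preceq M\bfI$ at negligible cost. A secondary subtlety is verifying that the descent identity $\tilde{\bfH}_k\bfH_k^{-1}\nabla f(\bfx_k)=\nabla f(\bfx_k)$ persists for the partitioned variants of \Cref{sec:activeSet}; this follows because the active and inactive blocks decouple in both the projection and the descent inequality, with the active block handled by the scaled Euclidean gradient step whose contribution is separately signed.
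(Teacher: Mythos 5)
Your proposal is correct and, for its core, follows the same route as the paper: your descent inequality is exactly the paper's \Cref{thm1} (second projection theorem with $\bfz=\bfx_k$, plus the one-metric identity that works because Lanczos is seeded with $\nabla f(\bfx_k)$, so $\bfV_k\bfV_k^\top\nabla f(\bfx_k)=\nabla f(\bfx_k)$ and the $c\bfU_k\bfU_k^\top$ shift drops out), and your Armijo analysis with threshold $\mu_k\le \tfrac{2s}{L}(1-\alpha)$ is the paper's \Cref{thm_ls2}, giving the same step-size lower bound and sufficient decrease. The divergence is in the endgame: the paper telescopes the Armijo decrease to get $\nabla f(\bfx_k)^\top\bfd_k\to 0$, hence $\bfd_k^\top\tilde{\bfH}_k\bfd_k\to 0$ and $\bfd_k\to 0$ by uniform positive definiteness, and then invokes a fixed-point characterization (\Cref{thm3}: $\bfx_*$ is stationary iff it is a fixed point of the iteration) to conclude; you instead pass to the limit directly in the variational inequality at a limit point, which requires a uniform \emph{upper} bound on $\|\tilde{\bfH}_k\|$ and the step-size lower bound you established. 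You correctly flag the upper bound as the missing piece; note it is essentially free here, since Lipschitz continuity of $\nabla f$ bounds $\|\nabla^2 f\|$ and hence $\|\bfT_k\|=\|\bfV_k^\top\bfG_k\bfV_k\|$ along the trajectory when $\bfG_k=\nabla^2 f(\bfx_k)$ (for Gauss--Newton surrogates one would state it as a hypothesis). Your version is arguably the more careful one: the paper's final inference from $\bfd_k\to 0$ to ``the sequence converges to a fixed point'' is exactly the step your limit argument makes precise, though like the paper you really only obtain stationarity of limit points rather than convergence of the whole sequence; neither argument addresses that stronger claim, so this is not a gap relative to the paper's own proof.
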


The assumptions hold for PNKH-B with and without variable partitioning. 
Hence unlike two-metric methods, the convergence of PNKH-B does not hinge upon active/inactive variable partitioning. 
Also, the theorem can also be straightforwardly extended to the preconditioned setting by repeating the same process in the proof.
Moreover, from the theorem, we obtain that our methods globally converge to the optimal solution for convex problems. We now begin to prove the theorem. The proof follows the approach in \cite{lee2014}, which studies proximal Newton-type methods. We first state and prove some lemmas, which will be used to prove the global convergence.
\begin{lemma}[Descent Direction]\label{thm1}
If $f$ is twice differentiable, $\bfH_k=\bfV \bfT \bfV^\top$ is generated by Lanczos tridiagonalization with initial vector $\nabla f(\bfx_k)$, the projection norm is induced by $\tilde{\bfH}_k=\bfH_k+c\bfU_k \bfU_k^\top$, where $\bfU_k$ contains orthonormal basis vectors of the orthogonal complement of the Krylov subspace, and $C$ is a box, then for any $\mu_k>0$, the update step $\bfd_k := \bfx_{k+1} - \bfx_k$ generated by (\ref{eq:iterHess}) and (\ref{eq:projHess}) satisfies
\begin{equation}\label{eq:d_descent_direction}
     \nabla f(\bfx_k)^\top \bfd_k \leq -\frac{1}{\mu_k} \bfd^\top_k \tilde{\bfH}_k \bfd_k.
\end{equation}
Hence the update step $\bfd_k$ is a descent direction.
\end{lemma}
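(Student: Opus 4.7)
The plan is to exploit the compatibility between $\bfH_k$ and $\tilde{\bfH}_k$ on the Krylov subspace and then read off the desired inequality from the first-order optimality (variational inequality) of the projection.

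First I would argue that $\nabla f(\bfx_k)$ lies in the column span of $\bfV_k$. This is simply the defining property of Lanczos tridiagonalization initialized with $\nabla f(\bfx_k)$: the Krylov subspace contains the seed vector, so $\bfV_k \bfV_k^\top \nabla f(\bfx_k) = \nabla f(\bfx_k)$ and equivalently $\bfU_k \bfU_k^\top \nabla f(\bfx_k) = \mathbf{0}$. Combining this with $\bfH_k = \bfV_k \bfT_k \bfV_k^\top$ and $\tilde{\bfH}_k = \bfV_k \bfT_k \bfV_k^\top + c\bfU_k \bfU_k^\top$, a short calculation shows $\tilde{\bfH}_k (\bfH_k^{-1} \nabla f(\bfx_k)) = \nabla f(\bfx_k)$, hence
\begin{equation*}
\bfH_k^{-1} \nabla f(\bfx_k) \;=\; \tilde{\bfH}_k^{-1} \nabla f(\bfx_k).
\end{equation*}
This is the crucial consistency property advertised throughout the paper, and it lets me rewrite $\bfy_{k+1} = \bfx_k - \mu_k \tilde{\bfH}_k^{-1}\nabla f(\bfx_k)$, so the step effectively uses $\tilde{\bfH}_k$ both in the search direction and in the projection metric.

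Next I would write down the first-order optimality condition for the projection~\eqref{eq:projHess}. Since $\tilde{\bfH}_k$ is symmetric positive definite and $C$ is a nonempty closed convex set, $\bfx_{k+1}$ is characterized by the variational inequality
\begin{equation*}
\bigl\langle \tilde{\bfH}_k (\bfx_{k+1} - \bfy_{k+1}),\, \bfz - \bfx_{k+1} \bigr\rangle \;\geq\; 0 \quad \text{for all } \bfz \in C.
\end{equation*}
Because $\bfx_k \in C$, the choice $\bfz = \bfx_k$ is admissible. Substituting $\bfx_{k+1} - \bfy_{k+1} = \bfd_k + \mu_k \tilde{\bfH}_k^{-1}\nabla f(\bfx_k)$ and $\bfz - \bfx_{k+1} = -\bfd_k$, the inner product collapses to
\begin{equation*}
-\bfd_k^\top \tilde{\bfH}_k \bfd_k \;-\; \mu_k\, \nabla f(\bfx_k)^\top \bfd_k \;\geq\; 0,
\end{equation*}
which upon rearranging and dividing by $\mu_k>0$ yields exactly~\eqref{eq:d_descent_direction}. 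Since $\tilde{\bfH}_k$ is uniformly positive definite, the right-hand side is strictly negative whenever $\bfd_k \neq \mathbf{0}$, which establishes that $\bfd_k$ is a descent direction.

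The only part requiring care is the identification $\bfH_k^{-1}\nabla f(\bfx_k) = \tilde{\bfH}_k^{-1}\nabla f(\bfx_k)$, because one must be precise about the pseudoinverse convention (the paper agrees to write $\bfH_k^{-1}$ for $\bfV_k \bfT_k^{-1} \bfV_k^\top$) and rely on the Lanczos seed being $\nabla f(\bfx_k)$ (up to sign/scaling, which does not change the Krylov subspace). Once that identity is in hand, the rest is a direct application of the standard projection inequality with the test point $\bfz = \bfx_k$, and no quantitative estimates are needed.
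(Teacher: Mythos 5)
Your proof is correct and follows essentially the same route as the paper: both rest on the second projection theorem (variational inequality) with the test point $\bfz=\bfx_k$ and on the fact that $\nabla f(\bfx_k)$ lies in the range of $\bfV_k$ because it seeds the Lanczos process. The only difference is bookkeeping—you use the Krylov-membership of the gradient up front to establish $\bfH_k^{-1}\nabla f(\bfx_k)=\tilde{\bfH}_k^{-1}\nabla f(\bfx_k)$, whereas the paper invokes it afterwards to simplify the term $\nabla f(\bfx_k)^\top \bfV_k\bfV_k^\top \bfd_k$.
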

\begin{proof}[Proof of \Cref{thm1}]
By the second projection theorem~\cite[Chapter~9.3]{beck2014}, the iterate $\bfx_{k+1}=\Pi_{\|\cdot\|_{\tilde{\bfH}_k}}\left( \bfy_{k+1}  \right)$ if and only if 
\begin{equation}\label{eq:SecondProjThm}
    (\bfy_{k+1}-\bfx_{k+1})^\top \tilde{\bfH}_k (\bfz-\bfx_{k+1}) \leq 0 \quad \text{for all } \bfz \in C.
\end{equation}
Substituting $\bfy_{k+1} = \bfx_k - \mu_k \bfV_k \bfT_k^{-1} \bfV_k^\top \nabla f(\bfx_k)$ and $\bfz=\bfx_k$ in~\eqref{eq:SecondProjThm}, we obtain
\begin{equation}\label{eq:des_dir_eq1}
    (\bfx_k - \mu_k \bfV_k \bfT_k^{-1} \bfV_k^\top \nabla f(\bfx_k)-\bfx_{k+1})^\top \tilde{\bfH}_k (\bfx_k-\bfx_{k+1})     \leq 0.
\end{equation}
Substituting $\bfd_k=\bfx_{k+1}-\bfx_{k}$ and $\tilde{\bfH}_k \bfV_k \bfT_k \bfV_k^\top + c\bfU_k \bfU_k^\top$ into~\eqref{eq:des_dir_eq1} and by the fact that the columns of $\bfV_k$ are orthogonal to that of $\bfU_k$, we get
\begin{equation}
     \bfd_k^\top \tilde{\bfH}_k \bfd_k  + \mu_k \nabla f(\bfx_k)^\top \left( \bfV_k \bfV_k^\top \right) \bfd_k \leq 0. \label{eq:des_dir_eq2}
\end{equation}
Noting that the Lanczos algorithm is initialized by the normalized gradient vector, the first column of $\bfV_k$ is given by
\begin{equation*}
    \bfv_1 = \frac{\nabla f(\bfx_k)}{\| \nabla f(\bfx_k)\|_2} =  \frac{\nabla f(\bfx_k)}{\gamma}.
\end{equation*}
Consequently, we obtain  
\begin{equation}
    \begin{split}
    \nabla f(\bfx_k)^\top \left( \bfV_k \bfV_k^\top \right) \bfd_k 
    &= 
    \gamma \bfv_1^\top \left( \bfV_k \bfV_k^\top \right) \bfd_k
    \\
    &= \gamma \begin{bmatrix} 1 & 0 \ldots 0 \end{bmatrix} \bfV_k^\top
    \bfd_k
    \\
    &= \gamma \bfv_1^\top \bfd_k
    \\
    &= \nabla f(\bfx_k)^\top \bfd_k.
    \end{split}
    \label{eq:nablafVV_to_nablaf}
\end{equation}
Plugging this into~\eqref{eq:des_dir_eq2}, we finally obtain
\begin{equation}
    \mu_k \nabla f(\bfx_k)^\top \bfd_k \leq  -\bfd^\top_k \tilde{\bfH}_k \bfd_k,
\end{equation}
which is equivalent to~\eqref{eq:d_descent_direction}.
\end{proof}
\begin{lemma}[Armijo Line Search Condition]\label{thm_ls2}
Suppose $C$ is a box, $\nabla f$ is Lipschitz continuous with constant $L>0$, $\tilde{\bfH}_k$'s are symmetric, and $\tilde{\bfH}_k \succeq s\bfI$ for all $k\in \mathbb{N}$ and for some $s>0$, i.e.
\begin{equation*}
    ||\bfz||_{\tilde{\bfH}_k}^2 \geq s ||\bfz||_2^2, \quad \text{for all } \bfz \text{ and for all } k \in \mathbb{N}.
\end{equation*}
For line search parameter $\alpha \in(0,1)$, if step size $\mu_k$ satisfies
\begin{equation*}
    \mu_k \leq \min\left(1, \frac{2s}{L}(1-\alpha) \right),
\end{equation*}
then the following sufficient descent condition is satisfied
\begin{equation}\label{eq:suff_descent_2}
f(\bfx_{k+1}) \leq  f(\bfx_k) + \alpha \nabla f(\bfx_k)^\top (\bfx_{k+1}-\bfx_k).
\end{equation}
\end{lemma}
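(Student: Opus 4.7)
The plan is to combine the standard descent inequality obtained from Lipschitz continuity of $\nabla f$ with the descent-direction bound established in Lemma~\ref{thm1}. The overall strategy is a variant of the classical Armijo-style argument for projected/proximal gradient methods, so the hard part is not really the chain of inequalities itself but rather carefully tracking how the two matrix norms $\|\cdot\|_2$ and $\|\cdot\|_{\tilde{\bfH}_k}$ interact through the uniform positive definiteness assumption $\tilde{\bfH}_k\succeq s\bfI$.

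First I would invoke the $L$-smoothness of $f$ (which follows from Lipschitz continuity of $\nabla f$) to obtain the quadratic upper bound
\begin{equation*}
f(\bfx_{k+1}) \;\leq\; f(\bfx_k) + \nabla f(\bfx_k)^\top \bfd_k + \tfrac{L}{2}\|\bfd_k\|_2^2,
\end{equation*}
where $\bfd_k = \bfx_{k+1}-\bfx_k$. Next I would split the linear term as $\nabla f(\bfx_k)^\top \bfd_k = \alpha\,\nabla f(\bfx_k)^\top \bfd_k + (1-\alpha)\,\nabla f(\bfx_k)^\top \bfd_k$. The Armijo condition~\eqref{eq:suff_descent_2} then reduces to showing that the remainder
\begin{equation*}
R_k \;:=\; (1-\alpha)\,\nabla f(\bfx_k)^\top \bfd_k + \tfrac{L}{2}\|\bfd_k\|_2^2
\end{equation*}
is nonpositive under the stated bound on $\mu_k$.

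To control $R_k$, I would apply Lemma~\ref{thm1} to upper-bound $\nabla f(\bfx_k)^\top \bfd_k \leq -\tfrac{1}{\mu_k}\bfd_k^\top \tilde{\bfH}_k\bfd_k$, and then use the uniform positive definiteness assumption $\bfd_k^\top \tilde{\bfH}_k \bfd_k \geq s\|\bfd_k\|_2^2$ to obtain
\begin{equation*}
(1-\alpha)\,\nabla f(\bfx_k)^\top \bfd_k \;\leq\; -\frac{(1-\alpha)s}{\mu_k}\|\bfd_k\|_2^2.
\end{equation*}
Substituting into $R_k$ gives $R_k \leq \bigl(\tfrac{L}{2} - \tfrac{(1-\alpha)s}{\mu_k}\bigr)\|\bfd_k\|_2^2$, which is nonpositive precisely when $\mu_k \leq \tfrac{2s(1-\alpha)}{L}$; the additional restriction $\mu_k\leq 1$ in the hypothesis is inherited from the backtracking scheme in Algorithm~\ref{alg:projectedNewtonCG_index} and does not enter the inequality. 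Combining these pieces yields~\eqref{eq:suff_descent_2}.

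The main subtlety to watch for is the correct use of Lemma~\ref{thm1}: that lemma bounds $\nabla f(\bfx_k)^\top \bfd_k$ in the $\tilde{\bfH}_k$-metric rather than the Euclidean one, so the step where $\|\bfd_k\|_{\tilde{\bfH}_k}^2$ is replaced by $s\|\bfd_k\|_2^2$ is the only place where the uniform positive definiteness hypothesis is really used, and it must be invoked in the correct direction (lower bound on the $\tilde{\bfH}_k$-norm). Once this is in place, the calculation is essentially routine and the degenerate case $\bfd_k = \bf0$ is handled trivially since then $\bfx_{k+1} = \bfx_k$ satisfies the Armijo condition with equality.
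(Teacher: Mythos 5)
Your proof is correct and follows essentially the same route as the paper: the Lipschitz quadratic upper bound, then the descent inequality of Lemma~\ref{thm1} combined with $\tilde{\bfH}_k \succeq s\bfI$ and the step-size bound $\mu_k \leq \tfrac{2s}{L}(1-\alpha)$ to absorb the quadratic term into the Armijo slope. You merely spell out the $\alpha$/$(1-\alpha)$ splitting that the paper's terse two-line argument leaves implicit, so there is nothing substantive to add.
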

\begin{proof}[Proof of \Cref{thm_ls2}]
Since $\bfx_{k}, \bfx_{k+1} \in C$, by the Lipschitz continuity of $\nabla f$, we have
\begin{align*}
     f(\bfx_{k+1}) &\leq f(\bfx_k) + \nabla f(\bfx_k)^\top (\bfx_{k+1}-\bfx_k) + \frac{L}{2}  \|\bfx_{k+1}-\bfx_k\|_2^2 \\
     &\leq f(\bfx_k) + \alpha \nabla f(\bfx_k)^\top (\bfx_{k+1}-\bfx_k).
\end{align*}
Here, the first step uses $\mu_k \leq \frac{2s}{L}(1-\alpha)$, $\tilde{\bfH}_k \succeq s\bfI$ and \eqref{eq:d_descent_direction} and in the second step we use $ \nabla f(\bfx_k)^\top (\bfx_{k+1}-\bfx_k) \leq 0$; see \Cref{thm1}. 
\end{proof}
\begin{lemma}\label{thm3}
Suppose the assumptions on $f$, $C$ and $\tilde{\bfH}_k$'s are the same as those in \Cref{thm_ls2}. Also the backtracking Armijo line search scheme is used. Then $\bfx_*$ is a stationary point of~\eqref{eq:optProb} if and only if $\bfx_*$ is a fixed point of our method.
\end{lemma}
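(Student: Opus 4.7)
The plan is to establish the equivalence by following a chain of equivalent conditions using the second projection theorem~\cite[Chapter~9.3]{beck2014}, in the same spirit as the proof of \Cref{thm1}. The key structural identity, already implicit in that proof, is that $\tilde{\bfH}_k\bfH_k^{-1} = \bfV_k\bfV_k^\top$, the orthogonal projector onto the Krylov subspace. Since Lanczos is initialized with $\nabla f(\bfx_k)$, this vector lies in that subspace, so $\nabla f(\bfx_k)^\top\bfV_k\bfV_k^\top = \nabla f(\bfx_k)^\top$, exactly as in~\eqref{eq:nablafVV_to_nablaf}. This identity is precisely what converts the Hessian-metric variational inequality characterizing the projection into the Euclidean variational inequality characterizing stationarity of~\eqref{eq:optProb}.

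For the implication \emph{fixed point $\Rightarrow$ stationary point}, I will start from a fixed point $\bfx_*$ and extract the step size $\mu_*>0$ at which the line search in \Cref{alg:projectedNewtonCG_index} accepted the update, yielding $\bfx_* = \Pi_{\|\cdot\|_{\tilde{\bfH}_*}}(\bfx_* - \mu_*\bfH_*^{-1}\nabla f(\bfx_*))$. Applying the second projection theorem gives $-\mu_*\nabla f(\bfx_*)^\top\bfH_*^{-1}\tilde{\bfH}_*(\bfz-\bfx_*)\leq 0$ for all $\bfz\in C$, and using symmetry of $\bfH_*^{-1}\tilde{\bfH}_*$ together with the structural identity collapses this to $\nabla f(\bfx_*)^\top(\bfz-\bfx_*)\geq 0$ for all $\bfz\in C$, which is the first-order stationarity condition for~\eqref{eq:optProb}.

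For the converse \emph{stationary point $\Rightarrow$ fixed point}, I will reverse the chain: the stationarity inequality $\nabla f(\bfx_*)^\top(\bfz-\bfx_*)\geq 0$ for all $\bfz\in C$ lifts, via the same structural identity, to $-\mu\nabla f(\bfx_*)^\top\bfH_*^{-1}\tilde{\bfH}_*(\bfz-\bfx_*)\leq 0$ for every $\mu>0$, so the second projection theorem yields $\bfx_* = \Pi_{\|\cdot\|_{\tilde{\bfH}_*}}(\bfx_* - \mu\bfH_*^{-1}\nabla f(\bfx_*))$ for every $\mu>0$. In particular, at the initial trial $\mu=1$ the candidate iterate is $\bfx_t = \bfx_*$, and the Armijo test~\eqref{eq:suff_descent_2} holds with equality (both sides equal $f(\bfx_*)$), so the backtracking line search accepts immediately and $\bfx_{k+1}=\bfx_*$, confirming that $\bfx_*$ is a fixed point of PNKH-B.

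The main obstacle I anticipate is careful bookkeeping around the line search in the converse direction: it is essential that the Armijo condition~\eqref{eq:suff_descent_2} be the non-strict version used in \Cref{thm_ls2}, since at a stationary point the sufficient decrease inequality holds only with equality, and a strict version would cause indefinite backtracking. A secondary subtlety is the degenerate case $\nabla f(\bfx_*)=\mathbf{0}$, where the Lanczos process is not well-defined but the search direction is trivially zero, so the projection leaves $\bfx_*$ unchanged and $\bfx_*$ is plainly stationary, and the equivalence still holds. Apart from these points, the proof is a direct reversal of the manipulations already carried out in \Cref{thm1}.
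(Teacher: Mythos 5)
Your proposal is correct and follows essentially the same route as the paper's proof: characterize the fixed-point condition via the second projection theorem and collapse the $\tilde{\bfH}_*$-metric variational inequality to $\nabla f(\bfx_*)^\top(\bfz-\bfx_*)\geq 0$ using the identity $\bfH_*^{-1}\tilde{\bfH}_* = \bfV_*\bfV_*^\top$ together with $\nabla f(\bfx_*)\in\mathrm{range}(\bfV_*)$, exactly as in~\eqref{eq:nablafVV_to_nablaf}. Your additional remarks on the non-strict Armijo test and the degenerate case $\nabla f(\bfx_*)=\mathbf{0}$ are careful bookkeeping beyond what the paper records, but they do not change the argument.
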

\begin{proof}[Proof of \Cref{thm3}]
The iterate $\bfx_*$ is a fixed point of our method if and only if
\begin{equation}\label{eq:FixedPointProj}
    \bfx_* = \Pi_{\| \cdot \|_{\tilde{\bfH}_*}} \left( \bfx_*-\mu_* \bfV_* \bfT_*^{-1} \bfV_*  \nabla f(\bfx_*) \right),
\end{equation}
where $\tilde{\bfH}_*$ is the shifted Hessian approximation, and $\mu_*>0$ is the step size. By the second projection theorem again, it is equivalent to
\begin{align*}
    (\bfx_* - \mu_* \bfV_* \bfT_*^{-1} \bfV_* \nabla f(\bfx_*)-\bfx_*)^\top \tilde{\bfH}_* (\bfz-\bfx_*) &\leq 0 \quad \text{for all } \bfz \in C.
\end{align*}
Using $\tilde{\bfH}_* = \bfV_* \bfT_*^{-1} \bfV_* + c\bfU_*\bfU_*^\top$, and following the same approach as in~\eqref{eq:nablafVV_to_nablaf}, 
this is simplified to $\nabla f(\bfx_*)^\top (\bfz-\bfx_*) \geq 0$ for all $\bfz \in C$, which is true if and only if $\bfx_*$ is a stationary point of the problem.
\end{proof}
Now, we are ready to prove \Cref{thm:global_convergence}, the global convergence of our method.
\begin{proof}[Proof of \Cref{thm:global_convergence}]
The sequence $\{ f({\bfx_k}) \}_k$ is decreasing because the update directions are descent directions (\Cref{thm1}) and the backtracking Armijo line search scheme guarantees sufficient descent at each step (\Cref{thm_ls2}). Since $f$ is closed and its infimum in $C$ is attained, the decreasing sequence $\{ f({\bfx_k}) \}_k$ converges to a limit.

By the sufficient descent condition (\ref{eq:suff_descent_2}), the convergence of $\{ f({\bfx_k}) \}_k$ and $\alpha>0$,
\begin{equation*}
\nabla f(\bfx_k)^\top(\bfx_{k+1}-\bfx_k) 
\end{equation*}
converges to zero. By \Cref{thm1}, one has
\begin{equation*}
    (\bfx_{k+1}-\bfx_k)^\top \tilde{\bfH}_k (\bfx_{k+1}-\bfx_k) \leq -\mu_k \nabla f(\bfx_k)^\top (\bfx_{k+1}-\bfx_k).
\end{equation*}
Hence $(\bfx_{k+1}-\bfx_k)^\top \tilde{\bfH}_k (\bfx_{k+1}-\bfx_k)$ converges to zero. Since $\tilde{\bfH}_k$'s are uniformly positive definite, $\bfx_{k+1}-\bfx_k$ converges to the zero vector.

This implies that the sequence $\{ \bfx_k\}_k$ converges to a fixed point of our method. By \Cref{thm3}, the sequence converges to a stationary point of the problem.
\end{proof}

\section{Experimental Results\label{sec:exp_results}}
We perform three numerical experiments motivated by different applications with PNKH-B. We compare its performance with two state-of-the-art projected Newton-CG (PNCG) methods, which are two-metric schemes; see \Cref{sec:Benchmark}.
In \Cref{sec:EP1}, we consider a PDE parameter estimation problem. 
In \Cref{sec:EP2}, we apply our method to an image classification problem. 
In \Cref{sec:EP3}, we experiment with an image reconstruction problem. 
All these applications require fitting a computational model to data, which is typically noisy.
Therefore, and since the computational models are expensive, we seek to use the optimization scheme to obtain a high-quality reconstruction within only a few iterations.
In all three experiments, using the low-rank approximated Hessian metric during the projection renders PNKH-B competitive with respect to the optimization performance and reconstruction quality to similar state-of-the-art two-metric methods. We set the Hessian shift parameter $c=10^{-3}$.

\subsection{Benchmark Methods}\label{sec:Benchmark} We compare PNKH-B to an implementation of the two-metric scheme described in~\cite{Haber2015} and a variant that includes the augmented indexing scheme from~\cite{Bertsekas1982,schmidt2012}. 
We refer to the scheme obtained using $\calA_k^{\rm bound}$ as PNCG (boundary index) and the scheme obtained using $\calA_k^{\rm aug}$ as PNCG (augmented index); see~\Cref{sec:activeSet}. 
The main difference between these schemes and our proposed method is the projection.
The PNCG schemes use~\eqref{eq:ProjEuclidean} to project all the components and are therefore considered two-metric schemes.
In contrast, our PNKH-B scheme uses a metric that, on the Krylov subspace, is consistent with that implied by the low-rank approximation of the Hessian and is therefore a generalized one-metric scheme. 

\subsection{Experiment 1: Direct Current Resistivity}\label{sec:EP1}
We use PNKH-B to solve the PDE parameter estimation problem motivated by the Direct Current Resistivity (DCR) described in~\cite{Haber2015,ruthotto2017jinv}; see also~\cite{dey1979,mcgillivray1992,wufung2019thesis,wufung2019uncertainty} for background and different instances of this problem.
\paragraph{Model Description\label{subsec:Description_P1}}
The goal of DCR in geophysical imaging is to estimate the conductivity of the subsurface by means of indirect measurement obtained on the earth's surface. Specifically, it first uses electrical sources on the surface to generate direct currents to create electric potential fields in the subsurface. Measurements of these potential fields are then collected on the surface.
The parameter estimation aims at reconstructing a three-dimensional image of the conductivity in the subsurface that is consistent with the measurements; for more details and illustrations of the DCR experiment see, e.g., \cite{dey1979,Haber2015,mcgillivray1992,ruthotto2017jinv}.
\begin{figure}
\includegraphics[width=1\textwidth]{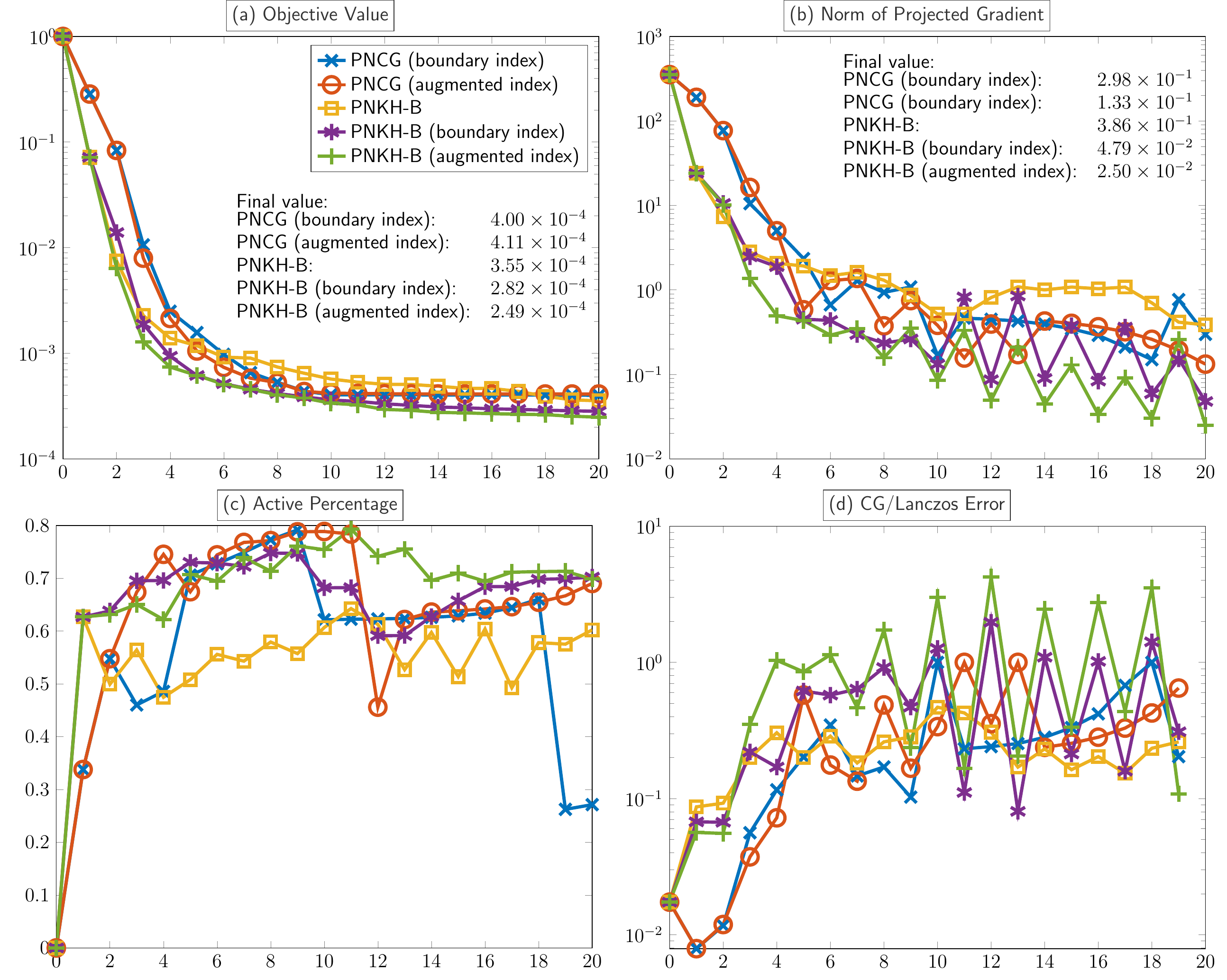}
\caption{Comparison of the convergence of two PNCG methods and three variants of PNKH-B for the direct current resistivity experiment in \Cref{sec:EP1}. (a): Relative reduction of objective function. (b): Norm of the projected gradient. (c): Percentage of variables in $\calA_k^{\rm aug}$ defined in~\eqref{eq:augmented_index}. (d): Relative residual error of CG/Lanczos. \label{fig:DCRfigure1}}
\end{figure}
\begin{figure}[h]
\centering
\includegraphics[width=.9\textwidth]{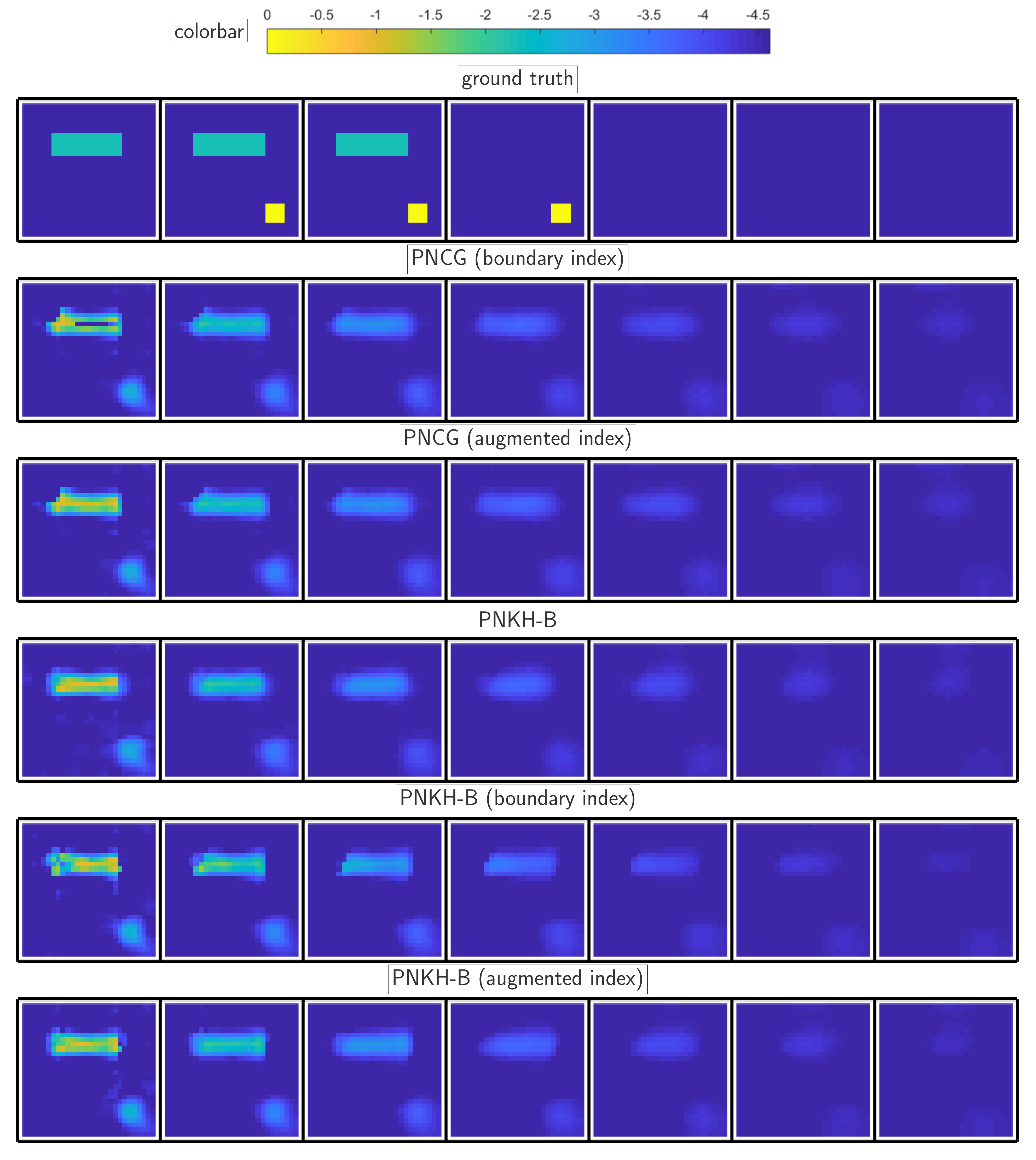}
\caption{Results after the third iteration on DCR generated by the five methods. The upper bound is purposely set to be $\bfm_u=-1$, which is smaller than some pixel values in the ground truth to test the ability of the methods to identify active variables.\label{fig:DCRfigure2}}
\end{figure}
  \begin{figure}[h]
  \centering
\includegraphics[width=1\textwidth]{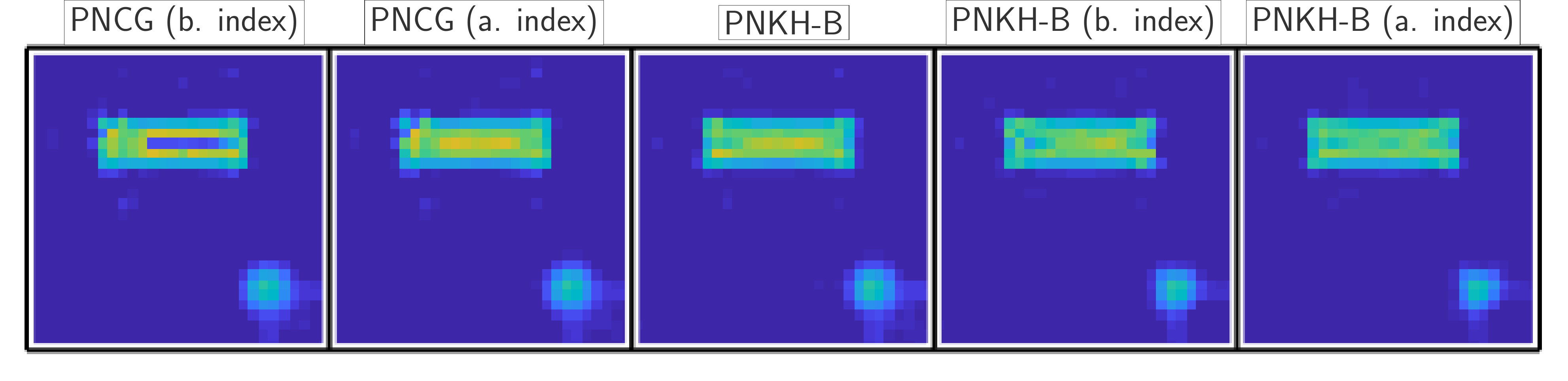}
    \caption{First slice of the final results on DCR generated by the five methods. There are noticeable artifacts in the final results of PNCG.\label{fig:DCRfigure3}}
\end{figure}
\paragraph{Experimental Results\label{subsec:result_P1}}
To set up the test, we follow the same discretize-then-optimize approach described in~\cite{Haber2015} that is also used in~\cite{ruthotto2017jinv,wufung2019multiscale}.
Using a uniform mesh with $N_m$ cells and $N_n$ nodes, we obtain the discrete forward problem
\begin{equation}\label{eq:discrete_forward_DCR}
   \bfD  = \bfP^\top \bfA (\bfm)^{-1} \bfQ + \bm{\epsilon} = \bfP^\top\bfU + \bm{\epsilon},
\end{equation}
where $\bfA(\bfm)\in \mathbb{R}^{N_n \times N_n}$ is a finite-volume discretization of the Poisson operator for the conductivity model $\bfm\in\R^{N_m}$, $\bfP\in \mathbb{R}^{N_n \times N_r}$ is the receiver matrix that maps the fields to data, the columns of $\bfQ\in \mathbb{R}^{N_n\times N_s}$ are discretized sources, the columns of $\bfU\in \mathbb{R}^{N_n\times N_s}$ are the potential fields, and $\bm{\epsilon}\in \mathbb{R}^{N_r\times N_s}$ is Gaussian noise. Here $N_r$ and $N_s$ are the number of receivers and sources, respectively. Note that with suitable discretization and boundary conditions, $\bfA$ is non-singular, which means that $\bfm \mapsto \bfU(\bfm)$ is well-defined and differentiable.

Given the measurement data $\bfD$, sources $\bfQ$, and receivers $\bfP$, we estimate  the corresponding model parameter $\bfm$ by solving the optimization problem
\begin{equation*}
   \min_{\bfm} \frac{1}{2}\|\bfP^\top \bfA (\bfm)^{-1} \bfQ -\bfD\|_F^2 + \frac{\gamma}{2}\| \bfL (\bfm-\bfm_{\text{ref}})\|_2^2 \;\;\;\;
   \text{subject to} \;\;\;\;  \bfm_l \leq \bfm \leq \bfm_u.
\end{equation*}
Here, $\gamma>0$ is a regularization parameter, $\bfm_\text{ref}$ is a given reference model, $\bfL$ is a regularization operator, $\bfm_l$ and $\bfm_u$ are the upper and lower bounds respectively, which are used to enforce the physical constraints for the model parameters. 

As common, we use the Gauss-Newton approximate Hessian $\bfG$ given by
\begin{equation*}
   \bfG = \bfJ(\bfm)^\top \bfJ(\bfm) + \gamma \bfL^\top \bfL,
\end{equation*}
where the Jacobian of the residual of~\eqref{eq:discrete_forward_DCR} is 
\begin{equation}
   \bfJ(\bfm) = - \bfP^\top \bfA(\bfm)^{-1}(\nabla_{\bfm}(\bfA(\bfm)\bfU))^\top.
\end{equation}
Note that the dimensions of $\bfm$ are typically very large. Moreover each evaluation of the objective function or product with the Jacobian $\bfJ$ or its transpose or computing the approximate Hessian-vector multiplication $\bfy \mapsto \bfG\bfy$ require inverting the PDE operator $\bfA$ (i.e., solving the PDE) $\min(N_r,N_s)$ times per source. Hence the computations in each outer (Newton-Krylov method) or inner (line search) iterations when solving the DCR model problem are very expensive, especially when there are a lot of sources.

In this experiment, we solve a 3-dimensional DCR problem on a mesh containing $36 \times 36 \times 12$ cells discretizing the domain $\Omega = (0,1)^3$. The test problem features 25 sources and 1,369 receivers located on the top surface. Following the finite volume discretization presented in~\cite{Haber2015}, we use a cell-centered discretization of the model $\bfm$ and nodal discretizations of the sources, receivers, and fields.
We add $1\%$ noise to the data and enforce smoothness by using a diffusion regularizer with regularization parameter $\alpha = 10^{-3}$. We also use symmetric successive over-relaxation (SSOR) as a preconditioner.

In our setup, we exclude voxels close to the boundary, sources, and receivers from the inversion. As a result, our model $\bfm$ is discretized over $30 \times 30 \times 10$ cells instead; in particular, $\bfm$ has size $n = 9900$.
The bounds $\bfm_l$ and $\bfm_u$ are set as vectors of all -4.6's and -1's, respectively. The upper bound is purposely set as smaller than some pixel values of the ground truth to test the ability of the methods to identify the active variables. 
The main cost of the parameter estimation is the large number of discrete PDE solves to evaluate the objective function, its gradient, and matrix-vector products with $\bfJ$ and $\bfJ^\top$. Therefore, we limit the number of CG/Lanczos to five in all instances. 

The experimental results for the DCR problem are shown in \Cref{fig:DCRfigure1,fig:DCRfigure2,fig:DCRfigure3}. In the experiments, we perform 20 Newton steps. The tolerance in the interior point method is set to be $10^{-10}$. In \Cref{fig:DCRfigure1}(a)-(b), the proposed methods have a significant boost in the initial convergence on the objective function value and the norm of the projected gradient.  
This is particularly evident in the early iterations as can be seen, e.g., by a one-order reduction of the objective function and projected gradient in the second iteration and the visual quality of the parameter estimate at the third iteration; see \Cref{fig:DCRfigure2}. At this iteration, we see that the proposed PNKH-B and PNKH-B (augmented index)'s results are closer to the ground truth and appear smoother. While the results obtained using all methods are similar at the final iteration, we note that PNCG (boundary index) leads to a non-smooth reconstruction; see \Cref{fig:DCRfigure3}.
Since PNCG is a two-metric scheme, the loss of the smoothness might be due to suboptimal scaling of the gradient step in~\eqref{eq:updatePartition} or the inconsistency of the preconditioner caused by the indexing. 

The proposed methods also have slightly smaller objective values after 20 iterations. \Cref{table:runtime} shows that all five methods require a comparable runtime. 
We highlight that the added costs of the interior point method used to compute the projection is only between 0.5\% and 2.5\% and took on average between 0.04 and 0.2 seconds. While the Lanczos tridiagonalization in PNKH-B takes longer on average than the conjugate gradient method in PNCG, the overall runtime of PNKH-B is reduced.
A key reason for the computational savings is the smaller number of backtracking line search iterations, which consequently reduces the number of projections and also PDE solves.

\subsection{Experiment 2: Image Classification}\label{sec:EP2}
We compare the performance of PNKH-B and PNCG for a multinomial logistic regression (MLR) arising in the supervised classification of hand-written digits in the  MNIST dataset~\cite{lecun1998}.

\paragraph{Model Description}\label{subsec:Description_P2}
Let $n_c$ the number of classes, and $\Delta_{n_c}$ be the unit simplex in $\R^{n_c}$.
Denote the training data by $\{(\bfb_j, \bfc_j)\}_{j=1}^N  \subset \mathbb{R}^{n_{\rm f}} \times \Delta_{n_c}$, where $\bfb_j$ is the input feature and $\bfc_j$ is its corresponding class label.
First, given training data $\bfb_j$, we enhance the features by propagating the original features $\bm{b}_j$ into a higher-dimensional space in $\R^{m_{\rm f}}$ obtained by a nonlinear transformation to the original variables. This procedure is also known as extreme learning machines \cite{huang2006}. It improves the ability to classify images that are not from the training set, i.e., it improves the ability to generalize. To perform extreme learning machines, we multiply the vectorized images by a randomly chosen matrix  $\mathbf{K} \in \mathbb{R}^{m_{\rm f} \times n_{\rm f}}$ with  $m_{\rm f}>n_{\rm f}$ whose entries are sampled from a standard normal distribution, i.e.,
\begin{equation*}
    \mathbf{d}_{j} = \tanh(\mathbf{K}\mathbf{b}_j ) \in \mathbb{R}^{m_{\rm f}}.
\end{equation*}
The supervised classification problem aims at training a hypothesis function $h_{\bfX} : \mathbb{R}^{m_{\rm f}} \to \Delta_{n_c}$ that accurately approximates the input-output relationship for new examples, i.e.,
\begin{equation}
    h_{\bfX}(\bfd^{\rm test}_i) \approx \bfc^{\rm test}_i, \quad \text{ for } \quad i=1,\ldots,M.
\end{equation}
Here, $\bfX$ are parameters of the hypothesis function and $\{ \bfd^{\rm test}_i, \bfc^{\rm test}_i\}_{i=1}^M$ is a test dataset, which is not used during training. 
\begin{figure}[h!]
\includegraphics[width=1\textwidth]{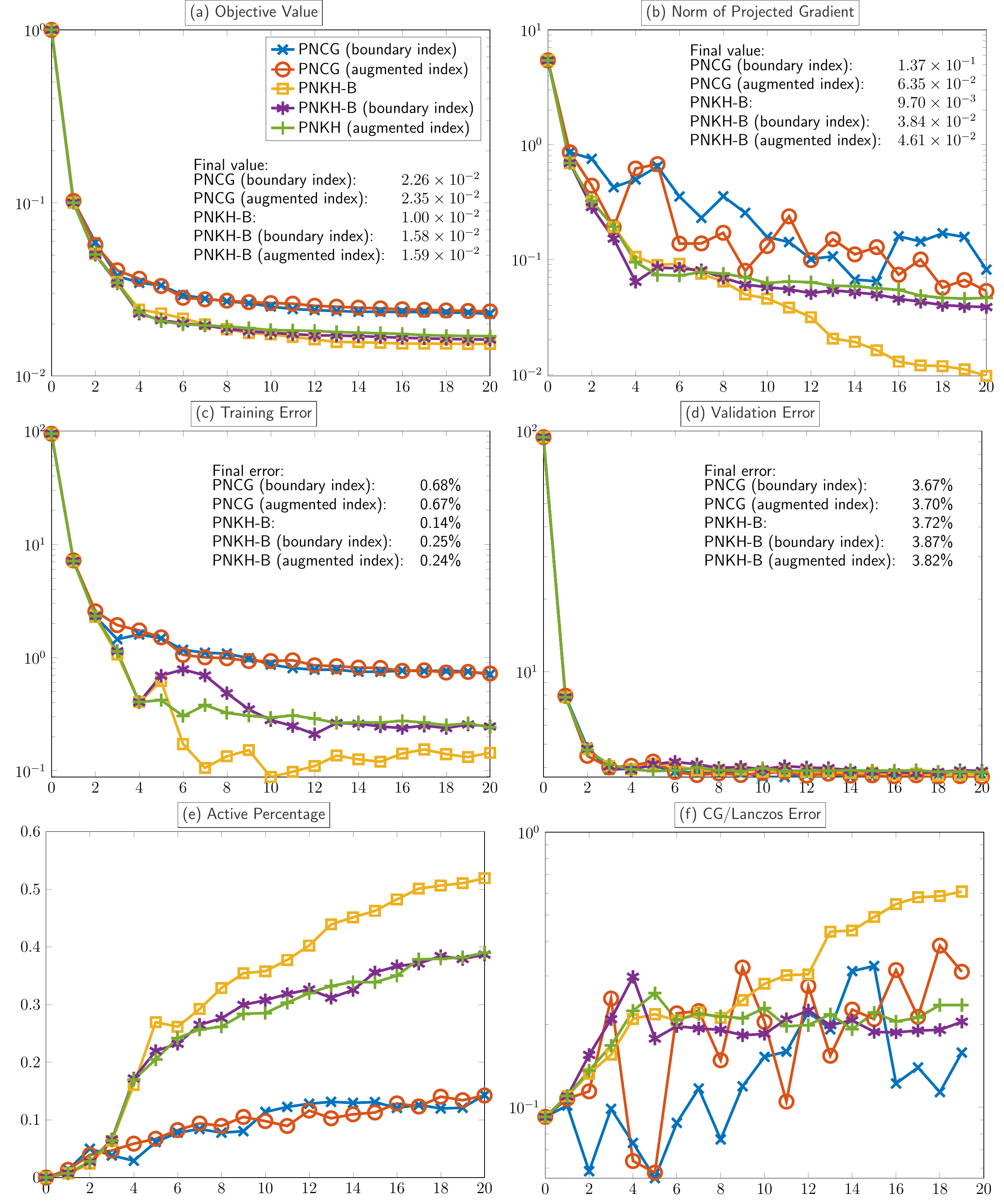}
\caption{Comparison of the convergence of two PNCG methods and three variants of PNKH-B for the image classification problem in \Cref{sec:EP2}. (a): Relative reduction of objective function. (b): Norm of the projected gradient. (c): Training errors. (d): Validation errors. (e): Percentage of variables in $\calA_k^{\rm aug}$ defined in~\eqref{eq:augmented_index}. (f): Relative residual error of CG/Lanczos. \label{fig:MNIST_results}}
\end{figure}

A common strategy for finding the hypothesis function is by solving the MLR problem
\begin{equation}\label{eq:MLR}
     \min_{\bfX_l \leq \bfX \leq \bfX_u} \frac{1}{N} \sum_{j=1}^N -\bfc_j^\top \log\big(h_\bfX (\bfd_j)\big) \quad
   \text{where} \quad   h_\bfX (\bfd_j) = \dfrac{\exp{(\bfX\bfd_j})}{\bfe_{n_c}^\top\exp{(\bfX\bfd_j})}.
\end{equation}
Here, the hypothesis function  is a linear model followed by a softmax transformation, which ensures that $ h_\bfX (\bfd) \in \Delta_{n_c}$ and the objective is to minimize the cross-entropy between the predicted probability distribution and the label. 
In the formulation above, we use $\bfX_l, \bfX_u \in\R^{n_c \times m_{\rm f}}$ to model lower and upper bounds on the entries of $\bfX$, respectively, with the goal to regularize the problem and improve generalization, which means improving the performance on the test data set.
Since the MLR problem is a smooth convex optimization problem, we use $\bfG = \nabla^2f(\bfX)$.

In our experiment, we use the MNIST dataset~\cite{lecun1998}, which consists of $60,000$ $28 \times 28$ grey-scale hand-written images of digits ranging from 0 to 9 that are split into $N=50,000$ training images and $M=10,000$ validation images. Here the transformed feature vectors are in a $m_{\rm f}=4000$-dimensional space.

\paragraph{Experimental Results}\label{subsec:result_P2}
We use a fixed number of 20 inexact Newton steps with 20 CG/Lanczos iterations per step for all five methods and manually tune the bounds on $\bfX$ so that the trained hypothesis function performs well on the validation data. We set the tolerance in the interior point method to be $10^{-12}$. In our case, we choose the entries of $\bfX_l$ and $\bfX_u$ to be -0.05 and 0.05, respectively. The performance of the optimization schemes and the accuracy of the hypothesis function can be seen in  \Cref{fig:MNIST_results}. In particular, in \Cref{fig:MNIST_results}(a)-(c), the three PNKH-B methods boost the initial convergence and outperform the PNCG methods with respect to the objective function value, norm of the projected gradient, and training error by some margin.
The comparison for the validation data is overall comparable; see \Cref{fig:MNIST_results}(d). Despite the more expensive projection step, the PNKH-B variants require a similar runtime in this experiment; see  \Cref{table:runtime}.

\subsection{Experiment 3: Spectral Computed Tomography}\label{sec:EP3}
We consider an image reconstruction problem arising in energy-windowed spectral computed tomography (CT).
The goal is to identify the material composition of an object from measurements taken with x-rays at different energy levels and from different projection angles.  
Our experimental setup follows~\cite{Hu2019,hu2019spectral} which also provide an excellent description and derivation of the problem. 
\paragraph{Model Description}\label{subsec:Description_P3}
As a forward model, we consider the discretized energy-windowed spectral CT model
\begin{equation}\label{eq:vecterize_CT}
    \bfy = (\bfS^\top \otimes \bfI) \text{exp} \{ -(\bfC\otimes \bfA)\bfw \} + \bm{\epsilon},
\end{equation}
where $\bfI \in \mathbb{R}^{(N_d \cdot N_p)\times(N_d \cdot N_p)}$ is the identity matrix, $\bfS \in \mathbb{R}^{N_e \times N_b}$ contains the spectrum energy of each energy window, $\bfC \in \mathbb{R}^{N_e \times N_m}$ contains the attenuation coefficients of each material at each energy level, $\bfA \in \mathbb{R}^{(N_d \cdot N_p)\times N_v}$ contains the lengths of the x-ray beams, $\bfy \in \mathbb{R}^{N_d \cdot N_p\cdot N_b}$ is the observed data containing the x-ray photons of each energy window, $\bfw \in \mathbb{R}^{N_v \cdot N_m}$ represents the weights of the materials of each pixel (and is the unknown variable), and $\bm{\epsilon} \in \mathbb{R}^{N_d \cdot N_p\cdot N_b}$ is the measurement noise. Here, $N_p$ is the number of angles of the x-ray beams, $N_b$ is the number of detectors and each of them detects a specific energy window, $N_m$ is the number of materials, $N_e$ is the number of energy levels of the emitted x-ray beams, $N_d$ and $N_v$ are related to the number of pixels of the image. In particular, for an image of size $n \times n$, $N_d=n$ and $N_v=n^2$.

The goal of the energy-windowed spectral CT model is to estimate the weights of materials $\bfw$ given the other variables except the noise in (\ref{eq:vecterize_CT}). Hence we formulate the following optimization problem
\begin{equation*}
    \min_{0 \leq \bfw \leq \bfw_u} \frac{1}{2}\| \bfy - (\bfS^\top \otimes \bfI) \text{exp} \{ -(\bfC\otimes \bfA)\bfw \}\|_2^2 + \frac{\gamma_1}{2} \| \bfD \bfw_{1:N_v} \|_2^2 + \gamma_2 \sum_{i=N_v+1}^{2N_v} \bfw_{i}.
\end{equation*}
Here, the bound constraints are used to enforce physical bounds, where the weights cannot be negative and cannot exceed the upper bound $\bfw_h$.
The second and third terms are the regularization terms also used in~\cite{hu2019spectral}.
The second term involves the discrete gradient operator $\bfD$ and enforces smoothness of the first material and the last term promotes sparsity of the second material.
As common in nonlinear least-squares problems, we use the Gauss-Newton approximation of the Hessian, i.e., 
\begin{equation*}
    \bfG = \bfJ(\bfw)^\top \bfJ(\bfw) + \gamma_1 \bfD^\top \bfD,
\end{equation*}
where $\bfD$ is a discrete differential operator acting on the first $N_v$ entries.
\begin{figure}[h!]
\includegraphics[width=1\textwidth]{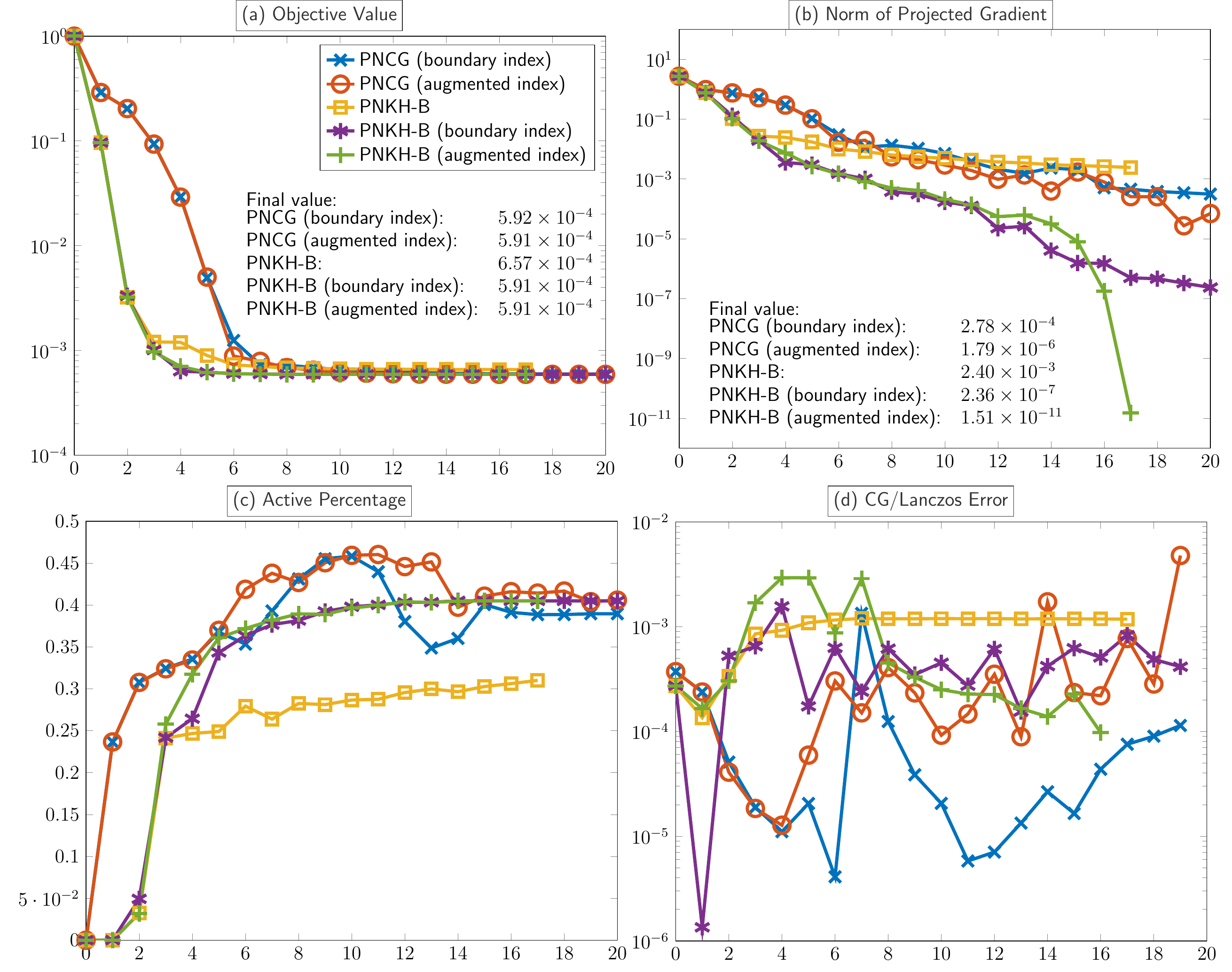}
\caption{Comparison of the convergence of two PNCG methods and three variants of PNKH-B for the energy-windowed spectral CT problem in \Cref{sec:EP3}. (a): Relative reduction of objective function. (b): Norm of the projected gradient. (c): Percentage of variables in $\calA_k^{\rm aug}$ defined in~\eqref{eq:augmented_index}. (d): Relative residual error of CG/Lanczos. \label{fig1:CT_result}}
\end{figure}
\begin{figure}[h!]
\includegraphics[width=1\textwidth]{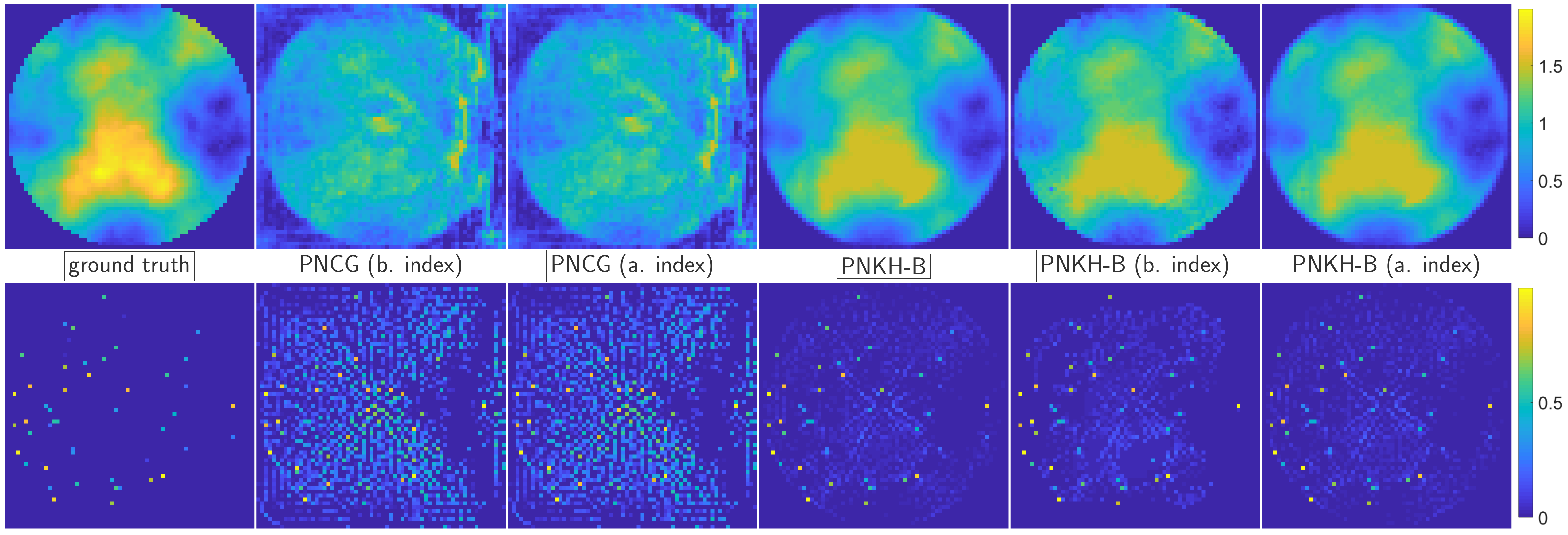}
\caption{Reconstructed images after the second iteration generated by the five methods on CT. The top and bottom images are the estimated composition of the two materials. The upper bound is purposely set to be $\bfw_u=1.5$, which is smaller than some pixel values in the ground truth, to test the ability of the methods to identify active variables. The final image quality is comparable for all schemes.
\label{fig2:CT_result}}
\end{figure}
\paragraph{Experimental Results}\label{subsec:result_P3}
The size of the variables of this problem is $n=8192$. We set the regularization parameters as $\gamma_1=10^9$ and $\gamma_2=10^3$. Since the Kronecker products are implemented effectively, the CT model problem is the least intense among the three testing problems in terms of computational cost. Therefore, we set the number of CG/Lanczos iterations to 100 for all five methods. In this experiment we show that our methods can converge to the optimal solution with very small gradient norm by choosing a very small tolerance of $10^{-16}$ in the interior point method, so that the projection is solved very accurately. Moreover, we purposely choose a tight bound $\bfw_u = [1.5,1.5,...,1.5]^\top$ to test the ability of the methods to compute a solution with many active entries, specifically some entries in the ground truth are outside of this bound. The experimental results of the CT model problem are shown in \Cref{fig1:CT_result,fig2:CT_result}. The proposed methods converge faster initially and all schemes achieve comparable results. In the second iteration of \Cref{fig1:CT_result}(a), the iterate of the three proposed methods achieve 60 times smaller objective function values than the comparing methods.
This also leads to a considerable improvement in the reconstruction quality; see \Cref{fig2:CT_result}. In \Cref{fig1:CT_result}(b), PNKH-B with boundary index and augmented index give competitive performance in terms of the norm of projected gradient. Specifically, they achieve gradient norm with magnitudes $10^{-7}$ and $10^{-11}$, respectively. In this example, the norm of the projected gradient decays very slowly in the last iterations of the PNKH-B scheme. From \Cref{table:runtime}, the runtime of PNKH-B with variable partitioning is roughly double that of PNCG's because of the very low tolerance ($10^{-16}$) in the interior point method, which we use to reduce the norm of projected gradient. We note that this increased accuracy does not necessarily improve the quality of the image reconstruction and in our experiments runtimes similar to that of PNCG can be obtained with a less accurate projection. 
Moreover, the runtime of PNKH-B \emph{without} variable partitioning is roughly three times that of PNCG's because it performs ten backtracking line searches before running into a line search break at iteration 18; the reason is the insufficient decrease in objective function value. 
Finally, we note that further improvements of the optimality conditions can be obtained by adaptively choosing the rank of $\bfH_k$, e.g., by choosing the relative residual tolerance in PCG using a forcing sequence that tightens the tolerance as the solution becomes more accurate.

\begin{table}
\footnotesize
\centering
 \caption{Comparison of runtime in seconds of our prototype implementation, the number of projections, average number of iterations of the interior point method (IPM) and final step length on the three experiments. The sizes of variables of experiment 1, 2 and 3 are 9900, 40010 and 8192, respectively. The total number of iterations is 20. The tests are run on a laptop computer with an Intel Core i5-7200U CPU, 8 GB RAM, and the software platform is MATLAB R2018b.\label{table:runtime}}
{\begin{tabular}{|c|c|c|c|c|c|c|} 
 \hline
 \multicolumn{2}{|l|}{ } & \begin{tabular}{@{}c@{}}PNCG \\ (b. index)\end{tabular} & \begin{tabular}{@{}c@{}}PNCG \\ (a. index)\end{tabular} & \begin{tabular}{@{}c@{}}PNKH-B \\  \end{tabular} & \begin{tabular}{@{}c@{}}PNKH-B \\ (b. index)\end{tabular} & \begin{tabular}{@{}c@{}}PNKH-B \\ (a. index)\end{tabular} \\
 
  \hline
  \multirow{5}{*}{Exp. 1} & Runtime & 192.4.8& 195.9& 195.1 &184.0& 185.3\\
  & IPM time(avg) &  &  & 4.8(0.2) & 0.9(0.04) &0.9(0.05) \\
  & No. of proj. & 31 & 27 & 24  & 20  &20  \\
 &Avg IPM iter. & & &79 &46& 49\\
  &Final step len. & 8.0e-1 & 8.2e-1 & 1.9e-1  &1 & 1 \\
  \hline
    \multirow{5}{*}{Exp. 2} & Runtime & 792.0& 796.5&814.4 &802.3& 791.0\\
  & IPM time(avg) &  &  &13.6(0.47) &9.98(0.33)& 11.23(0.37)\\
  & No. of proj. & 31 & 32 & 29  & 30  &30  \\
 &Avg IPM iter. & & &17 &17& 18\\
  &Final step len. & 1.9e-2 & 9.4e-3 & 1.5e-1  &5.0e-2 & 7.5e-2 \\
  \hline
  \multirow{5}{*}{Exp. 3} & Runtime & 47.3& 46.7&138.5& 83.0&78.5\\
  & IPM time(avg) &  &  &93.1(2.7)&33.6(1.5)&33.1(1.8)\\
  & No. of proj. & 24 & 25 & 34  & 22  &18  \\
 &Avg IPM iter. & & &201 &164& 190\\
  &Final step len. & 1 & 1 & 3.9e-4  &1 & 1 \\
  \hline
\end{tabular}}
\end{table}

\section{Conclusion}\label{sec:conclusion}
We present PNKH-B, a Projected Newton-Krylov method for bound-constrained minimization whose search direction and projection rely on a low-rank approximation of the (approximate) Hessian. Our method can be seen as an extension of Newton-CG methods to bound-constrained problems since we compute the low-rank approximation of the Hessian using a few steps of Lanczos tridiagonalization. The novelty of our method is the use of the metric induced by this approximation in the projection step. We contribute an interior point method that effectively exploits the low-rank approximation to achieve a complexity that is linear with respect to the number of variables.  The consistent use of the metric leads to a simpler algorithm compared to two-metric schemes that require partitioning into active and inactive variables to ensure convergence. We also propose two variants of the framework, which incorporate the current knowledge of the active/inactive variables; this improved the convergence in some cases.  The experimental results on PDE parameter estimation, machine learning and image reconstruction show that the proposed methods lead to faster initial convergence with moderate runtime overhead compared to the existing state-of-the-art projected Newton-CG methods. Our methods are also competitive in the final objective value, norm of the projected gradient and reconstruction quality. We provide our prototype MATLAB code at \url{https://github.com/EmoryMLIP/PNKH-B}.

One direction for future work is to improve the efficiency of the quadratic program solved in the projection stage (e.g., using alternative interior point methods~\cite{ferris2002interior,fine2001efficient,zhang2017modified} or constrained conjugate gradient methods~\cite{more1991solution,yang1991class}).

\section*{Acknowledgments}
The authors would like to thank the two anonymous referees for their thorough review and helpful suggestions and Yunyi Larry Hu and James Nagy for sharing the data and code for the computer tomography experiment.
\bibliographystyle{siamplain}
\bibliography{main}
\end{document}